\documentclass{amsart}
\usepackage[russian,english]{babel}

\usepackage{amsfonts}
\usepackage{amssymb}
\usepackage{color}

\newtheorem{thm}{Theorem}[section]
\newtheorem{cor}[thm]{Corollary}
\newtheorem{lem}[thm]{Lemma}
\newtheorem{prob}[thm]{Problem}

\theoremstyle{definition}

\theoremstyle{remark}
\newtheorem{rem}{Remark}[section]

\numberwithin{equation}{section}

\begin{document}

\title[Approximation of solutions]{Approximation of solutions to parabolic 
Lam\'e type operators in cylinder domains and Carleman's formulas for them 
}

\author{P.Yu. Vilkov}

\address[Pavel Vilkov]
{Siberian Federal University
                                                 \\
         pr. Svobodnyi 79
                                                 \\
         660041 Krasnoyarsk
                                                 \\
         Russia}
\email{pavel\_vilkov17@mail.ru}

\author{I.A. Kurilenko}

\address[Il'ya Kurilenko]
{Siberian Federal University
                                                 \\
         pr. Svobodnyi 79
                                                 \\
         660041 Krasnoyarsk
                                                 \\
         Russia}
\email{ilyakurq@gmail.com}

\author{A.A. Shlapunov}

\address[Alexander Shlapunov]
{Siberian Federal University
                                                 \\
         pr. Svobodnyi 79
                                                 \\
         660041 Krasnoyarsk
                                                 \\
         Russia}
\email{ashlapunov@sfu-kras.ru}

\subjclass {Primary 35B25; Secondary 35J60}
\keywords{parabolic Lam\'e type operators, approximation theorems, Carleman's formulas}
\begin{abstract}
Let $s \in {\mathbb N}$, 
$T_1,T_2 \in {\mathbb R}$, $T_1<T_2$, and let $\Omega, \omega $ be bounded
domains in ${\mathbb R}^n$, $n \geq 1$ such that $\omega \subset \Omega$ and 
the complement $\Omega \setminus \omega$ have no non-empty compact components in 
 $\Omega$. We investigate the problem of approximation of solutions to parabolic 
Lam\'e type system from the Lebesgue class 
 $L^2(\omega \times (T_1,T_2))$ in a cylinder domain 
 $\omega \times (T_1,T_2) \subset {\mathbb R}^{n+1}$ by more regular solutions in 
a bigger domain $\Omega \times (T_1,T_2)$. As an application of the obtained 
approximation theorems we construct  Carleman's formulas for recovering solutions to 
these parabolic operators from the Sobolev class  $H^{2s,s}(\Omega \times (T_1,T_2))$ 
via  values the solutions on a part of the lateral surface of the cylinder and the 
corresponding them stress tensors.
\end{abstract}

\maketitle

\section*{Introduction}
\label{s.Int}

It was known from the middle of the XX-th century that the approximation 
theorems for solutions of homogeneous elliptic equations (and systems) 
are closely related to ill-posed problems for the corresponding elliptic 
operators, see, for instance,  \cite{Merg56}, \cite{MaHa74} for the Laplace equation 
or  \cite[ch. 5--8, 10]{Tark36} for general elliptic operators with the uniqueness 
condition in small. Actually, the key role in the development of the approximation 
theory has the approach by C. Runge \cite{R1885} for the uniform approximation 
of holomorphic functions on compact sets; see also \cite{Mal56} for (non-necessarily elliptic)
operators with constant coefficients and \cite[ch. 4, 5]{Tark37} for elliptic 
operators with sufficiently smooth coefficients). However, more delicate 
approximation theorems in various function spaces, where behaviour of the elements 
are controlled up to the boundary of the considered sets, appeared to be more 
important for applications, see, for instance,  the pioneer papers by A.G. Vitushkin  
\cite{V67} and V.P. Havin \cite{H68} for analytic functions or the monograph 
\cite[ch. 5--8]{Tark36} for Sobolev solutions to systems of differential equations 
with surjective/injective symbols.

Taking in account general suggestions by 
 M.M. Lavrent'ev \cite{Lv57}, 
S. Bergman  \cite{Berg70}, I.F. Krasichkov \cite{Kras68}, a way to find  
systems with the double orthogonality property was indicated in the paper by 
L.A. Aizenberg and A.M. Kytmanov \cite{AKy}. Combined with the approximation 
theorems and the integral representation method, this approach by \cite{AKy} has 
lead to the construction of Carleman's formulas for exact and approximate solutions 
to the Cauchy problem for holomorphic functions. This scheme was successfully 
adopted for the investigation of the Cauchy problem for a wide class of elliptic
equations, see  \cite[ch. 10, 12]{Tark36}, \cite{ShSMZh},  
\cite{ShTaLMS}, \cite{ShZamm}, or elliptic complexes, see \cite{FeSh2}). 

In the last decades, the area of the application of the ill-posed Cauchy type problems 
has expanded due to the theory of parabolic equations, see, 
for instance, \cite{PuSh15}, \cite{MMT17}, 
\cite{KuSh}. In the present paper we apply  the described above scheme by L.A. Aizenberg 
in order to study the ill-posed Cauchy problem for the parabolic Lam\'e type operator  
${\mathcal L}$ in ${\mathbb R}^{n+1}$,
\begin{equation} \label{eq.L}
{\mathcal L} = \partial _t 
-\mu\Delta - (\mu + \lambda) \nabla\operatorname{div} + \sum_{j=1}^n A_j \partial_j  + A_0,
\end{equation}
where $\Delta$ is the Laplace operator in ${\mathbb R}^{n}$, $\nabla$ is the gradient 
operator, $\operatorname{div} $ is the divergence operator, $\mu$ and $\lambda$ are the 
Lam\'e constants, $\mu>0$, $\lambda+\mu \geq 0$,  and 
$A_j$ are $(n\times n)$-matrices over ${\mathbb R}$; actually, 
equations with the operator ${\mathcal L}$ can be considered as 
one of the linearisation of the evolution Navier-Stokes Equations, 
see, for instance, \cite[ch. III, \S 1]{Tema79}. 

More precisely, according to \cite{PuSh15}, the ill-posed Cauchy problem for the operator 
${\mathcal L}$ in a cylinder domain in ${\mathbb R}^{n+1}$ with the data 
on its lateral surface can be reduced to the problem of the extension 
of solutions to the operator ${\mathcal L}$ from a lesser cylinder domain 
to a bigger one. The last problem could be solved with the use of the systems 
with the double orthogonality property when considered in suitable Hilbert spaces. 
However, in \cite{PuSh15} a solvability criterion for the Cauchy problem 
was obtained in the H\"older spaces  and the solutions were constructed as formal 
power series, only.  In the present paper we obtain the solvability criterion 
in the anisotropic Sobolev spaces in terms systems 
with the double orthogonality property and, using them, we construct Carleman's 
formulas for the exact and the approximate solution to the Cauchy problem
cf. \cite{KuSh} for the heat operator. Since Aizenberg's method  relies on the completeness 
of  the used doubly orthogonal system, we also prove the theorem on the approximation 
from the Lebesgue class $L^2(\omega \times (T_1,T_2))$ in the cylinder domain 
 $\omega \times (T_1,T_2) \subset {\mathbb R}^{n+1}$
by more regular solutions in a bigger domain $\Omega \times (T_1,T_2)$, 
cf. \cite{ShHeat} for the heat operator.

\section{Preliminaries}
\label{s.1}

Let ${\mathbb R}^n$ be the $n$-dimensional Euclidean space with the coordinates  
$x=(x_1, \dots , x_n)$ and let  $\Omega \subset {\mathbb R}^n$ be a bounded domain 
(open connected set). As usual, denote by  $\overline{\Omega}$ the closure of $\Omega$, and by
 $\partial\Omega$ its boundary.  We assume that  $\partial \Omega$ 
is piece-wise smooth hypersurface. We denote also by 
$\Omega_{T_1,T_2}$ a bounded open cylinder  
 $\Omega \times (T_1,T_2)$ in ${\mathbb R}^{n+1}$ with $T_1<T_2$; 
for $\Omega \times (0,T)$ we write simply  $\Omega_T$. 
 
We consider functions over  ${\mathbb R}^n$ and 
${\mathbb R}^{n+1}$.  As usual, for $s \in {\mathbb Z}_+$ we denote by $C^s(\Omega)$ 
the space of all $s$ times continuously differentiable 
functions on $\Omega$ and by $C^{s,\gamma}(\Omega)$ we denote 
the corresponding H\"older space with the power $\gamma\in (0,1)$. Next,  for  
$\gamma\in [0,1)$ and relatively open set 
 $S \subset \partial \Omega$, we denote by
	$C^{s,\gamma}(\Omega\cup S)$ the set of functions from   
$C^{s,\gamma}(\Omega)$ such that all their derivatives up to order $s$
extend continuously on $\Omega \cup S$ (the last space will be considered 
as a Banach space if  $S=\partial \Omega$). Let  
 $L^2 (\Omega)$ be the Lebesgue space over 
$\Omega$ with the standard inner product 
$(u,v)_{L^2 (\Omega)} $, 
and $H^s (\Omega)$, $s\in \mathbb N$, be the Sobolev space  
with the standard inner product 
$(u,v)_{H^s (\Omega)}$. Investigating spaces of solutions to the heat equation, we need 
the anisotropic Sobolev spaces $H^{2s,s} (\Omega_{T_1,T_2})$, $s \in  {\mathbb Z}_+$, 
with the standard inner product and the anisotropic H\"older spaces 
$C^{2s,\gamma,s,\frac{\gamma}{2}} ((\Omega \cup S)_{T_1,T_2})$ 
(the last space will be considered as a Banach space if  $S=\partial \Omega$). 
 see, for instance, \cite{LadSoUr67}, \cite[ch. 2]{Kry08}, \cite{Kry96}.

Finally, for $k \in {\mathbb Z}_+$, we denote by $H^{k,2s,s} (\Omega_{T_1,T_2})$ 
the set of all functions  $u \in H^{2s,s} (\Omega_{T_1,T_2})$ such that 
 $\partial ^\beta_x u \in H^{2s,s} (\Omega_{T_1,T_2})$ for all  $|\beta|\leq k$. 
Similarly, one introduces that space 
$C^{k, 2s,\gamma,s,\frac{\gamma}{2}} ((\Omega \cup S)_{T_1,T_2})$. 

It is convenient to denote by  ${\mathbf C} ^{s,\gamma} (\Omega\cup S)$ 
the space of vector fields  ($n$-vector functions) with components 
of the class $C ^{s,\gamma} (\Omega\cup S)$ and, similarly, 
for the spaces $\mathbf{L}^2 (\Omega)$, 
$\mathbf{H}^s (\Omega)$, $\mathbf{H}^{2s,s} (\Omega_{T_1,T_2})$, etc.

We also will use the so-called Bocher spaces 
of functions depending on $(x,t)$ from the strip  
$\mathbb{R}^n \times  [T_1,T_2]$.
Namely, for a Banach space $\mathcal B$ (for example, the space of functions 
on a subdomain of $\mathbb{R}^n$) and    $p \geq 1$, we denote by  
$L^p (I,{\mathcal B})$ the Banach space of all the measurable mappings 
  $u : [T_1,T_2] \to {\mathcal B}$
with the finite norm  
$$
   \| u \|_{L^p ([T_1,T_2],{\mathcal B})}
 := \| \|  u (\cdot,t) \|_{\mathcal B} \|_{L^p ([T_1,T_2])},
$$
see, for instance, \cite[ch. \S 1.2]{Lion69},  \cite[ch.~III, \S~1]{Tema79}. 

The space $C ([T_1,T_2],{\mathcal B})$ is introduced with the use of the 
same scheme; this is the Banach space of all the continuous mappings
$u : [T_1,T_2] \to {\mathcal B}$ with the finite norm 
$$
   \| u \|_{C ([T_1,T_2],{\mathcal B})}
 := \sup_{t \in [T_1,T_2]} \| u (\cdot,t) \|_{\mathcal B}.
$$

Obviously, the steady differential Lam\'e type operator 
$$ 
L= -\mu\Delta - (\mu + \lambda) \nabla\operatorname{div} + \sum_{m=1}^n A_m \partial_m  + A_0
$$
is strongly elliptic, if $\mu>0$ and $\lambda+\mu\geq 0$. Hence the operator  
${\mathcal L}$, given by formula \eqref{eq.L}, is strongly uniformly 
parabolic, see, for instance, \cite{eid}, \cite{sol}; besides, as the 
coefficients of the operator are constant, it is well known, it admits a fundamental 
solution  $\Phi (x,y,t,\tau)$ of the  convolution type, i.e. there is a kernel 
$\Phi (x,t)$ that can be constructed with the use of the Fourier transform and such that $\Phi
 (x,y,t,\tau)= \Phi (x-y,t-\tau) $, see \cite[Ch. 2, \S 1.2]{eid}. In the simplest case where 
$A_m =0$ for all $m=0, 1, \dots, n$, the components $\Phi_{i,j} (x,t)$  
of the kernel $\Phi (x,t)$ of the fundamental solution may be written as 
\begin{equation*}
 \Phi_{i,j}(x,t)=\varphi(x,\mu t)\delta_{i,j}+
\int_{\mu t}^{(2\mu +\lambda) t}\frac{\partial^2\varphi 
(x,\zeta)}{\partial x_i \partial x_j}d\zeta, 
\,\, 1\leq i,j\leq n, 
\end{equation*}
where $\delta_{i,j}$ is the Kronecker symbol and  
\begin{equation*}
	\varphi (x,t)=
	\left\{
	\begin{array}{lll}
	\frac{1}{(2\sqrt{\pi t})^n}e^{-\frac{|x|^2}{4t}} & \textrm{ if } & t>0,\\
	0 & \textrm{ if } & t\leq 0, 
	\end{array}
	\right.
\end{equation*}
is  the 
fundamental solution of the heat operator $\partial _t - \Delta$, 
see \cite{eid}. Formulas for the components of the fundamental solutions 
$\varphi (x,t)$ and $\Phi (x,t)$ imply immediately that they are smooth outside 
the diagonal  $\{ (x,t ) =0\}$ and real analytic with respect to 
the space variables. In particular, this means 
that the parabolic operator ${\mathcal L}$ is hypoelliptic.  
Besides, the fundamental solution allows to construct a useful integral 
Green formula for the operator ${\mathcal L}$. With this purpose, fix  
a $(n\times n)$-matrix differential first order operator $B_1$  with the principal 
symbol non-degenerate on the normal vectors  $\nu(x) =(\nu_1(x), \dots, \nu_n(x))$ 
to the surface $\partial \Omega$ 
at all the points $x \in \partial \Omega$. Denote by $\{C_0, C_1\}$ the Dirichlet 
pair associated with the Dirichlet pair $\{I_n, B_1\}$ via (first) Green formula
for the operator $L$, i.e. $(n\times n)$-matrix differential operators $C_j$ 
of order $j$,  with the principal 
symbol non-degenerate on the normal vectors
 $\nu(x)$ and such that  
$$
\int_{\partial \Omega} \Big( (C_1 v)^* u + (C_0 v)^* B_1 u\Big) ds = 
(Lu,v) _{\mathbf{L}^2 (\Omega)} - (u,L^*v) _{\mathbf{L}^2 (\Omega) }
$$
for all $u,v \in \mathbf{C}^\infty (\overline \Omega)$, where $I_n$ is the unit 
$(n\times n)$-matrix, and $L^*$ is the formal adjoint operator for $L$. 
As the operator $B_1$ one usually takes the boundary stress tensor  $\sigma$  with the 
components
	\begin{equation*}
		\sigma_{i,j} = \mu\delta_{i,j}\sum\limits_{k=1}^{n}\nu_k\frac{\partial}{\partial x_k} + 
		\mu\nu_{j}\frac{\partial}{\partial x_i} + \lambda\nu_{i}\frac{\partial}{\partial x_j}.
	\end{equation*}
Then, in the simplest case where $A=0$, we obtain $C_0=I_n$, $C_1 = \sigma$. 
Next, for vector functions  $f \in  \mathbf{L}^2(\Omega_{T_1,T_2})$, 
$v \in L^2 ([T_1,T_2]), 
\mathbf{H}^{1/2}(\partial \Omega))$, $w \in L^2 ([T_1,T_2]), 
\mathbf{H}^{3/2}(\partial \Omega))$, $h \in \mathbf{H}^{1/2}(\Omega)$, we consider
parabolic potentials:  
\begin{equation*} 
I_{\Omega, T_1} (h) (x,t)= \int\limits_{\Omega}\Phi(x-y, t)h(y,T_1) dy,  
$$
$$
G_{\Omega,T_1} (f) (x,t)=\int\limits_{T_1}^t\int\limits_\Omega \Phi(x-y,\ t-\tau)f(y, \tau)
dy d\tau, 
\end{equation*}
\begin{equation*} 
V_{\partial \Omega,T_1} (v) (x,t)=\int\limits_{T_1}^t\int\limits_{\partial \Omega} 
(C_{0,y}\Phi ^* (x-y, t-\tau))^* (x-y, t-\tau) v(y, \tau)
ds(y)d\tau, 
\end{equation*}
\begin{equation*} 
W_{\partial \Omega,T_1} (w) (x,t)= - \int\limits_{T_1}^t\int\limits_{\partial \Omega} 
(C_{1,y} \Phi^* (x-y, t-\tau))^*  w(y, \tau)ds(y) d\tau
\end{equation*}
(see, for instance, \cite[ch. 1, \S 3, ch. 5, \S 2]{frid}. 
By the definition, these are  (improper) integrals, depending 
on the parameter $(x,t)$.

\begin{lem} \label{l.Green} 
Let $\partial \Omega \in C^2$.  
For any $ T_1 < T_2$ and all  $u \in {\mathbf H}^{2,1} (\Omega_{T_1,T_2}) $ the following 
formula holds: 
\begin{equation} \label{eq.Green}
\left.
\begin{aligned}
u(x, t) \mbox{ in } \Omega_{T_1,T_2}  \\
0 \mbox{ outside } \overline{\Omega_{T_1,T_2}} 
\end{aligned}
\right\} \! = 
I_{\Omega,T_1} (u)   + G_{\Omega,T_1} ({\mathcal L}u)  + 
V _{\partial \Omega, T_1} \left( 
B_1 u \right)  +  W_{\partial \Omega, T_1} (u) .
\end{equation}
\end{lem}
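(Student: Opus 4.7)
\medskip
\noindent\textbf{Proof plan.}
My plan is first to establish \eqref{eq.Green} for smooth vector fields $u \in \mathbf{C}^\infty(\overline{\Omega_{T_1,T_2}})$ and then to extend it by density to $\mathbf{H}^{2,1}(\Omega_{T_1,T_2})$, relying on the continuity of the potentials $I_{\Omega,T_1}$, $G_{\Omega,T_1}$, $V_{\partial \Omega, T_1}$ and $W_{\partial \Omega, T_1}$ on the relevant trace and volume spaces, in the spirit of \cite{frid}.

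For smooth $u$, I would fix $(x_0, t_0) \in \mathbb{R}^{n+1}$ and test against the shifted adjoint kernel $v(y,\tau) = \Phi^*(x_0 - y, t_0 - \tau)$. Assume first that $(x_0, t_0) \in \Omega_{T_1,T_2}$. For $0 < \epsilon < t_0 - T_1$, I would apply the spatial (first) Green formula for the operator $L$ recalled in the introduction to the pair $\bigl(u(\cdot,\tau), v(\cdot,\tau)\bigr)$ on $\Omega$, for every $\tau \in (T_1, t_0 - \epsilon)$, and then integrate the resulting identity in $\tau$, transferring the time derivative by parts. Because $\mathcal{L}^* v = 0$ away from $(x_0, t_0)$ and $\mathcal{L} = \partial_t + L$ has constant coefficients, the volume contributions reassemble into
$$
\int_{T_1}^{t_0 - \epsilon}\!\!\int_\Omega \Phi^*(x_0 - y, t_0 - \tau)\,\mathcal{L} u(y,\tau)\,dy\,d\tau,
$$
which tends to $G_{\Omega,T_1}(\mathcal{L} u)(x_0, t_0)$ as $\epsilon \to 0^+$. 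The boundary contributions produced by the spatial Green formula are precisely those assembling into $V_{\partial \Omega, T_1}(B_1 u)(x_0, t_0) + W_{\partial \Omega, T_1}(u)(x_0, t_0)$ via the Dirichlet pair $\{C_0, C_1\}$, while the temporal boundary yields the slice at $\tau = T_1$, matching $I_{\Omega, T_1}(u)(x_0, t_0)$, together with an inner slice $\int_\Omega \Phi^*(x_0 - y, \epsilon)\,u(y, t_0 - \epsilon)\,dy$.

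The analytic core will then be passing $\epsilon \to 0^+$ in that inner slice. Since $\Phi(\cdot, \epsilon)$ converges to $I_n\,\delta_0$ as a matrix-valued mollifier, a property inherited from $\varphi$ via the explicit representation for the components $\Phi_{i,j}$ recalled above, the inner slice tends to $u(x_0, t_0)$, completing the identity in the interior case. If, instead, $(x_0, t_0) \notin \overline{\Omega_{T_1,T_2}}$, then $v$ is smooth on the whole closed cylinder $\overline{\Omega_{T_1,T_2}}$; no singular contribution arises, all boundary and volume terms may be written with $\epsilon = 0$ from the start, and the same computation reassembles to $0$, which is the second case of \eqref{eq.Green}.

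The main obstacle I expect is the density extension from $\mathbf{C}^\infty(\overline{\Omega_{T_1,T_2}})$ to $\mathbf{H}^{2,1}(\Omega_{T_1,T_2})$. I would choose a sequence $u_k \in \mathbf{C}^\infty(\overline{\Omega_{T_1,T_2}})$ with $u_k \to u$ in $\mathbf{H}^{2,1}(\Omega_{T_1,T_2})$ and pass to the limit in \eqref{eq.Green}. This requires sharp anisotropic trace theorems to ensure the convergence of the initial data $u_k(\cdot, T_1)$ in $\mathbf{H}^{1}(\Omega)$ and of the lateral traces $u_k|_{\partial \Omega \times (T_1,T_2)}$, $B_1 u_k|_{\partial \Omega \times (T_1,T_2)}$ in the appropriate anisotropic Sobolev spaces on $\partial \Omega \times (T_1,T_2)$, combined with the mapping properties of the four parabolic potentials on the corresponding target spaces; it is this last verification, rather than the formal integration by parts, which will be the technically most demanding step.
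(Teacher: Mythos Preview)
Your plan is correct and follows the classical route: establish the representation for smooth $u$ by combining the spatial Green identity for $L$ with integration by parts in $\tau$ over $(T_1,t_0-\epsilon)$, use the approximate-identity behaviour of $\Phi(\cdot,\epsilon)$ to recover $u(x_0,t_0)$ in the limit, and then extend to $\mathbf{H}^{2,1}$ by density and continuity of the four potentials. There is nothing wrong with it.

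You should know, however, that the paper does not supply a proof at all: its entire argument for this lemma is a citation, namely ``See \cite[ch.~6, \S 12]{svesh} (and also \cite[theorem 2.4.8]{Tark37} for more general operators admitting fundamental solutions/parametrices).'' In other words, the authors treat \eqref{eq.Green} as a known Green-type representation for parabolic operators with a bilateral fundamental solution and defer to standard references. Your outline is essentially a reconstruction of what those references contain, so the two ``approaches'' are not in competition; you are simply writing out what the paper takes for granted. The only caution is the one you already flag: the density step requires that $\partial\Omega\in C^2$ suffices for the anisotropic trace theorems and for the mapping properties of $I_{\Omega,T_1}$, $G_{\Omega,T_1}$, $V_{\partial\Omega,T_1}$, $W_{\partial\Omega,T_1}$ on the corresponding Sobolev data spaces, and this is exactly where one must lean on \cite{LadSoUr67}, \cite{frid} or \cite{Tark37} rather than reinvent the estimates.
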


\begin{proof}  See, \cite[ch. 6, \S 12]{svesh} 
(and also \cite[theorem 2.4.8]{Tark37} for more general 
operators, admitting fundamental solutions/parametreces).
\end{proof}

Now, let $S _{\mathcal L}(\Omega_{T_1,T_2})$ be the set of all the generalized 
$n$-vector functions on $\Omega_{T_1,T_2}$, satisfying (homogeneous) Lam\'e type equation
\begin{equation} \label{eq.heat}
{\mathcal L} u = 0 \mbox{ in } \Omega_{T_1,T_2} 
\end{equation}
in the sense of distributions. First of all we note that the hypoellipticity 
of the operator $\mathcal L$ means that all the solutions to equation \eqref{eq.heat} 
are infinitely differentiable on their domain, i.e.  
$$
S _{\mathcal L}(\Omega_{T_1,T_2}) \subset \mathbf{C}^\infty (\Omega_{T_1,T_2}).
$$
As it is known, this is a closed subspace in the space 
$C^\infty (\Omega_{T_1,T_2})$ with the standard Fr\'echet topology
(inducing the uniform convergence together with all the derivatives 
on compact subsets of   $\Omega_{T_1,T_2}$). 

Also, we need the space  $S _{\mathcal L}(\overline{\Omega_{T_1,T_2}})$, 
defined as the union of the spaces 
$$
\cup_{G \supset 
\overline{\Omega_{T_1,T_2}} } S _{\mathcal L}(G),
$$
where the union is with respect to all the domains $G \subset {\mathbb R}^{n+1}$,  
containing the closure of the domain  $\Omega_{T_1,T_2}$.

Let ${\mathbf H}^{k,2s,s} _{\mathcal L}(\Omega_{T_1,T_2}) =
\mathbf{H}^{k,2s,s} (\Omega_{T_1,T_2}) \cap S _{\mathcal L}(\Omega_{T_1,T_2}) $, 
$s \in  {\mathbb Z}_+$, $k \in {\mathbb Z}_+$. 
As it is known, this is a closed subspace of the Sobolev space 
$\mathbf{H}^{k,2s,s}(\Omega_{T_1,T_2})$. 
Similarly, ${\mathbf C}^{\infty} _{\mathcal L} (\overline{\Omega_{T_1,T_2}}) =
\mathbf{C}^{\infty} (\overline{\Omega_{T_1,T_2}}) \cap S _{\mathcal L}(\Omega_{T_1,T_2}) $ 
is a closed subspace, consisting of solutions to equation \eqref{eq.heat}, in the space 
$\mathbf{C}^{\infty}  (\overline{\Omega_{T_1,T_2}})$. It follows from the 
hypoellipticity of  the operator  $\mathcal L$ that the following (continuous) 
embeddings 
\begin{equation} \label{eq.emb.H}
S _{\mathcal L}(\overline{\Omega_{T_1,T_2}}) \subset 
\mathbf{C}^{\infty} _{\mathcal L} (\overline{\Omega_{T_1,T_2}}) \subset
\mathbf{H}^{k,2s,s} _{\mathcal L}(\Omega_{T_1,T_2}) 
\end{equation} 
are fulfilled for all $k,s\in {\mathbb Z}_+$. 

\section{An approximation theorem}
\label{s.2}

In this section we discuss an approximation theorem for solutions 
to the operator $\mathcal L$. Actually, it is quite similar to 
the approximation theorems for elliptic operators mentioned in the introduction.
Also, they are well known for the heat equation,
see, for instance, \cite{J} for the spaces with uniform convergence 
on compact sets or  \cite{ShHeat} for the Sobolev type spaces. 

\begin{thm}[P.Yu. Vilkov, A.A. Shlapunov] \label{t.dense.base}
Let  $\omega \subset \Omega \Subset {\mathbb R}^n$, $\partial \omega \in C^2, \partial \Omega 
\in C^1$. If the complement $\Omega \setminus  \omega$ has no compact components in 
$\Omega$  then $S_{\mathcal L}(\overline {\Omega_{T_1,T_2}})$ 
is everywhere dense in $\mathbf{L}^{2} _{\mathcal L}(\omega_{T_1,T_2})$. Conversely, 
 if $A=0$, then the absence of the compact components of $\Omega\setminus\omega$
in $\Omega$ is also  necessary for the density of 
$S_{\mathcal L}(\overline {\Omega_{T_1,T_2}})$ 
 in $\mathbf{L}^{2} _{\mathcal L}(\omega_{T_1,T_2})$. 
\end{thm}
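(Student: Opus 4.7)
The plan is to follow the classical Runge-type approximation scheme for systems with a convolution fundamental solution: reduce density via the Hahn--Banach theorem to an annihilator statement, construct the adjoint parabolic volume potential of the annihilator, and propagate its vanishing by spatial analyticity combined with the topological hypothesis. By Hahn--Banach it suffices to prove that each $f\in\mathbf{L}^2(\omega_{T_1,T_2})$ which annihilates every element of $S_{\mathcal L}(\overline{\Omega_{T_1,T_2}})$ in the $\mathbf{L}^2$-pairing also annihilates every $v\in\mathbf{L}^2_{\mathcal L}(\omega_{T_1,T_2})$. Given such an $f$, I extend it by zero to $\tilde f$ on $\mathbb R^{n+1}$ and introduce the adjoint parabolic volume potential $\Psi$ of $\tilde f$: the distributional solution on $\mathbb R^{n+1}$ of $\mathcal L^{*}\Psi=\tilde f$ that vanishes for $\tau\geq T_2$. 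Standard parabolic potential estimates yield $\Psi\in\mathbf{H}^{2,1}_{\mathrm{loc}}(\mathbb R^{n+1})$, and the convolution structure of $\Phi$ gives $\mathcal L^{*}\Psi=0$ on $\mathbb R^{n+1}\setminus\overline{\omega_{T_1,T_2}}$.

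The key observation is that for every $(y,\tau)\in\mathbb R^{n+1}\setminus\overline{\Omega_{T_1,T_2}}$ and every $\xi\in\mathbb R^n$ the function $(x,t)\mapsto\Phi(x-y,t-\tau)\xi$ is smooth on a neighborhood of $\overline{\Omega_{T_1,T_2}}$ and therefore belongs to $S_{\mathcal L}(\overline{\Omega_{T_1,T_2}})$. Testing the annihilation condition against these columns of the fundamental solution yields $\Psi(y,\tau)=0$ on $\mathbb R^{n+1}\setminus\overline{\Omega_{T_1,T_2}}$; letting $\tau\to T_1^{-}$ and using continuity in $\tau$ of the parabolic potential also gives $\Psi(\cdot,T_1)\equiv 0$ on $\mathbb R^n$. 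To propagate the vanishing from $\mathbb R^{n+1}\setminus\overline{\Omega_{T_1,T_2}}$ into $(\Omega\setminus\overline\omega)\times(T_1,T_2)$, I use that solutions of the constant coefficient system $\mathcal L^{*}\Psi=0$ are real analytic in $x$ for every fixed $t$, so $\Psi(\cdot,t)$ is real analytic on $\mathbb R^n\setminus\overline\omega$. The topological hypothesis enters through the observation that every connected component of $\mathbb R^n\setminus\overline\omega$ contains points of $\mathbb R^n\setminus\overline\Omega$: the unbounded component does so trivially since $\Omega$ is bounded, and any bounded component contained in $\Omega$ would have compact closure in $\Omega$ (its boundary lies in $\partial\omega\subset\Omega$), contradicting the hypothesis. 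Hence the identity theorem for real analytic functions forces $\Psi(\cdot,t)\equiv 0$ on $\mathbb R^n\setminus\overline\omega$ for every $t\in(T_1,T_2)$, so $\Psi$ vanishes on $\mathbb R^{n+1}\setminus\overline{\omega_{T_1,T_2}}$.

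To finish the direct implication, I use that $\Psi$ and its first spatial derivatives are continuous across $\partial\omega_{T_1,T_2}$ (a standard regularity property of volume potentials of $L^2$ densities), so the interior traces of $\Psi$ and of $C_1^{*}\Psi$ on $\partial\omega\times(T_1,T_2)$, as well as $\Psi(\cdot,T_1)$ and $\Psi(\cdot,T_2)$, all vanish. Plugging these into Green's first identity for the pair $(\mathcal L,\mathcal L^{*})$ on $\omega_{T_1,T_2}$ applied to $(v,\Psi)$ (after mollifying $v$ in $t$ if necessary to legitimize the integration by parts for the low regularity $v\in\mathbf{L}^2_{\mathcal L}$) gives
\[
\int_{\omega_{T_1,T_2}} f\cdot v\,dx\,dt=\int_{\omega_{T_1,T_2}}(\mathcal L^{*}\Psi)\cdot v\,dx\,dt=0,
\]
which is the required annihilator statement. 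For the converse with $A=0$ I argue by contradiction: if $K\subset\Omega$ is a non-empty compact component of $\Omega\setminus\omega$ in $\Omega$, pick $(x_0,t_0)$ in the interior of $K\times(T_1,T_2)$ and a nonzero $\xi\in\mathbb R^n$, and put $v(x,t):=\Phi(x-x_0,t-t_0)\xi$. Since $x_0\notin\overline\omega$, $v$ is smooth on $\overline{\omega_{T_1,T_2}}$ and belongs to $\mathbf{L}^2_{\mathcal L}(\omega_{T_1,T_2})$; if a sequence $u_k\in S_{\mathcal L}(\overline{\Omega_{T_1,T_2}})$ converged to $v$ in $\mathbf{L}^2(\omega_{T_1,T_2})$, then parabolic interior regularity would upgrade this to convergence of the Cauchy data on $\partial K\times(T_1,T_2)\subset\omega_{T_1,T_2}$, and well-posedness of the forward initial-lateral boundary value problem for $\mathcal L$ on $K\times(T_1,T_2)$ (available when $A=0$ via classical Lam\'e potentials) would force $u_k$ to converge in $K$ to a regular extension of $v$, contradicting the singularity of $v$ at $(x_0,t_0)$.

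The main obstacle I anticipate is twofold: first, the precise point-set translation of the topological hypothesis into "every component of $\mathbb R^n\setminus\overline\omega$ reaches $\mathbb R^n\setminus\overline\Omega$" requires care; second, in the converse implication the passage from $\mathbf{L}^2$-approximation on $\omega_{T_1,T_2}$ to convergence of Cauchy data on $\partial K\times(T_1,T_2)$ depends on anisotropic trace theorems and interior regularity estimates for the Lam\'e-type parabolic operator that themselves use the constant-coefficient and $A=0$ assumptions.
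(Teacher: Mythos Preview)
Your sufficiency argument is essentially the paper's: both set up the adjoint volume potential of the annihilator, test against columns of $\Phi(\cdot-y,\cdot-\tau)$ for $(y,\tau)\notin\overline{\Omega_{T_1,T_2}}$ to get vanishing there, and then push the vanishing into $(\Omega\setminus\overline\omega)\times(T_1,T_2)$ by spatial real-analyticity together with the topological hypothesis. The only difference is the endgame: the paper shows that the potential, being in $\mathbf H^{2,1}$ and vanishing outside $\overline{\omega_{T_1,T_2}}$, lies in the $\mathbf H^{2,1}$-closure of $\mathbf C^\infty_0(\omega_{T_1,T_2})$ (their Lemma~2.2), and then pairs $\mathcal L^*$ of the approximants against the $\mathbf L^2_{\mathcal L}$-element directly via the distributional equation $\mathcal Lu=0$. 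This is cleaner than your Green-identity route, which requires traces of the low-regularity $v\in\mathbf L^2_{\mathcal L}(\omega_{T_1,T_2})$ on $\partial\omega_{T_1,T_2}$ that need not exist; mollifying $v$ in $t$ alone does not produce spatial boundary traces.

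Your necessity argument, however, has a genuine gap and differs substantially from the paper. First, the inclusion $\partial K\times(T_1,T_2)\subset\omega_{T_1,T_2}$ is false: a compact component $K$ of $\Omega\setminus\omega$ has $\partial K\subset\partial\omega$, so $\partial K$ sits on the \emph{boundary} of $\omega$, not in its interior, and interior parabolic regularity gives you nothing there from $\mathbf L^2(\omega_{T_1,T_2})$-convergence. Second, even if you had lateral Cauchy data on $\partial K\times(T_1,T_2)$, the forward initial--lateral problem on $K\times(T_1,T_2)$ also needs initial data on $K\times\{T_1\}$, and $\mathbf L^2(\omega_{T_1,T_2})$-convergence gives no control whatsoever over $u_k(\cdot,T_1)$ on $K$ (which is disjoint from $\omega$). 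This is exactly where the parabolic case diverges from the elliptic Runge argument you seem to be transplanting. The paper avoids both obstacles by a completely different construction: it uses Tikhonov's entire solutions $v_0$ of the heat equation with \emph{compact support in time}, builds $v_1=\phi(x)v_0(x,-t)$ with a spatial cutoff, and exhibits $\Pi_0(\mathcal L^* v_1)$ as a nonzero element of $\mathbf L^2_{\mathcal L}(\omega_{T_1,T_2})$ orthogonal to all of $S_{\mathcal L}(\overline{\Omega_{T_1,T_2}})$, the orthogonality being verified through the Green representation and Fubini. The compact time support of $v_0$ is precisely what kills the initial-data contribution that blocks your approach.
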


\begin{proof} \textit{Sufficiency.} Clearly, the set 
 $S_{\mathcal L}(\overline {\Omega_{T_1,T_2}})$ is everywhere dense in
 $\mathbf{L}^{2} _{\mathcal L}(\omega_{T_1,T_2})$ if and only if the 
following relations 
\begin{equation} \label{eq.ort}
(u,w)_{\mathbf{L}^2 (\omega_{T_1,T_2})} = 0 \mbox{ for all } w\in 
S_{\mathcal L}(\overline {\Omega_{T_1,T_2}})
\end{equation}
means precisely for  the  field $u\in \mathbf{L}^{2} _{\mathcal L}(\omega_{T_1,T_2})$ 
that $u\equiv 0$ in $\omega_{T_1,T_2}$. 

Assume that the complement 
$\Omega \setminus  \omega$ has no (non-empty connected) compact components in  $\Omega$.  In order to prove the sufficiency of the statement 
we will use the fact that the operator ${\mathcal L}$ admits the bilateral 
fundamental solution of the convolution type. By the definition, 
\begin{equation} \label{eq.right}
{\mathcal L}_{x,t} \Phi(x-y,t-\tau) = I_n \delta (x-y, t-\tau),   
\end{equation}
where $\delta (x, t)$ is the Dirac functional supported at the point $(x,t)$. 
Besides, the convolution type provides the normality property for the 
fundamental solution, i.e.
\begin{equation} \label{eq.left}
{\mathcal L}^*_{y,\tau} \textcolor{red}{\Phi^*(x-y,t-\tau)}  =I_n \delta (x-y, t-\tau). 
\end{equation}
We note that the normality property is not self-evident, though it can be provided under 
some not very restrictive assumptions,  see \cite[Property 2.2]{eid}.

Let for the field $u\in \mathbf{L}^{2} _{\mathcal L}(\omega_{T_1,T_2})$ relation 
\eqref{eq.ort} is fulfilled. Consider an auxiliary vector field 
\begin{equation}
\label{eq.v}
v (y,\tau) = \int_{\omega_{T_1,T_2}} \Phi^*(x-y,t-\tau) u (x,t)   dx \, dt.  
\end{equation}
According to \eqref{eq.right} we have
${\mathcal L}_{x,t} \Phi(x-y,t-\tau) =0 $, if  $ (x,t) \ne (y,\tau)$. 
That is why 
${\mathcal L}_{x,t} \Phi(x-y,t-\tau) =0$ in $\Omega_{T_1,T_2}$ 
for each fixed pair $(y,\tau) \not \in \Omega_{T_1,T_2}$. 
Now, using the hypoellipticity of the operator $\mathcal L$, we conclude that 
$\Phi(x-y,t-\tau) \in S_{\mathcal L}(\overline {\Omega_{T_1,T_2}})$ 
with respect to variables $(x,t) \in \Omega_{T_1,T_2}$ for each fixed 
pair  $(y,\tau) \not \in 
\Omega_{T_1,T_2}$. In particular, relations  \eqref{eq.ort}  imply 
\begin{equation} \label{eq.v1}
v (y,\tau) = 0   \mbox{ in }{\mathbb R}^{n+1} \setminus \overline \Omega_{T_1,T_2}.
\end{equation}
On the other hand, \eqref{eq.left} yields  
\begin{equation} \label{eq.v0}
{\mathcal L}^*v    = \chi_{\omega_{T_1,T_2}} u \mbox{ in } {\mathbb R}^{n+1},
\end{equation}
where $\chi_{\omega_{T_1,T_2}}$ is the characteristic function 
of the domain  $\omega_{T_1,T_2}$. 
Obviously, 
$$
{\mathcal L}^*_{y,\tau} v (y,\tau)=0 \mbox{ in  } {\mathbb R}^{n+1} 
\setminus \overline \omega_{T_1,T_2},
$$
and then, by the discussed above  properties of the fundamental solution, the vector field 
 $v$ is real analytic with respect to the space variables of 
${\mathbb R}^{n+1} \setminus \overline \omega_{T_1,T_2} $. 

Since $\omega$ has a smooth boundary, each component of   
${\mathbb R}^{n} \setminus \overline \omega $ is itself a smooth domain and 
the similar conclusion is valid for the domain $\Omega $, too. However the 
complement $\Omega \setminus  \omega$ has no compact components in 
$\Omega$ and hence each component of  
${\mathbb R}^{n} \setminus \omega $ intersects with
 ${\mathbb R}^{n} \setminus  \Omega $ by a non-empty open set. 
Thus, \eqref{eq.v1} and the uniqueness theorem for the real analytic functions 
imply that 
\begin{equation} \label{eq.v2}
v (y,\tau) = 0   \mbox{ in }{\mathbb R}^{n+1} \setminus \overline \omega_{T_1,T_2}.
\end{equation}
In addition,  \eqref{eq.v0}, \eqref{eq.v2} mean that the field 
 $v$ is a solution to the Cauchy problem 
$$
\left\{
\begin{array}{lll}
{\mathcal L}^*  v = \chi_{\omega_{T_1,T_2}} u \mbox{ in } {\mathbb R}^{n}  \times 
(-T_2-1,1-T_1),\\
 v(y,1-T_1) = 0 \mbox{ on } {\mathbb R}^{n} .\\
\end{array}
\right.
$$
Taking in account the natural relation between parabolic and backward-parabolic 
operators and using arguments from 
 \cite[ch. 2, \S 5 theorem 3]{Kry08}, we may conclude that  
$v\in \mathbf{H}^{2,1} ({\mathbb R}^{n}  
\times (-T_2-1,1-T_1))$ and the solution is unique in this class. 
The regularity of this unique solution to the Cauchy problem can be 
expressed in term of the Bochner classes, too. Namely, 
 $v\in C ( [-T_2-1,1-T_1], \mathbf{H}^{1}({\mathbb R}^{n}) )
\cap L^2 ([-T_2-1,1-T_1], \mathbf{H}^{2} ({\mathbb R}^{n}) )
$, see, for instance, \cite[ch. 3, \S 1]{Tema79}, where similar linear problems 
for Stokes' equations are considered. In particular, the vector field  
 $v$ belongs to the space 
\begin{equation} \label{eq.space.1}
 C ( [T_1-1,T_2+1], \mathbf{H}^{1}({\mathbb R}^{n}) )
\cap L^2 ([T_1-1, T_2+1], \mathbf{H}^{2} ({\mathbb R}^{n}) )
\cap \textbf{H}^{2,1} ({\mathbb R}^{n}  \times (T_1-1,T_2+1))  .
\end{equation}

\begin{lem} \label{l.dense.c0} Any vector field of type \eqref{eq.v}, satisfying 
\eqref{eq.v2}, can be approximated by the fields from $\mathbf{C}^\infty_0  (\omega_{T_2,T_2})$
in the topology of the Hilbert space $\mathbf{H}^{2,1} (\omega_{T_1,T_2})$. 
\end{lem}

\begin{proof} Let  $\partial_ \nu=\sum\limits_{j=1}^n\nu_j (x)
\partial_{x_j}$ be the derivative 
with respect to the exterior unit normal 
$\nu (x)=
(\nu_1 (x), ..., \nu_n (x))$ to the surface $\partial\Omega$ at the point$x$. 
If  $\partial\omega$ is a $C^2$-smooth surface then, as the field 
$v$ belongs to the space \eqref{eq.space.1}, we see that there are traces
$$
v_{|\partial (\omega_{T_1,T_2})} \in
\mathbf{H}^{1/2} (\partial (\omega_{T_1,T_2})), 
$$
$$
 v_{|(\partial \omega)_{T_1,T_2}} \in L^2 ([T_1, T_2], \mathbf{H}^{3/2} (\partial \omega))
, \, \, 
\partial_\nu  v_{|(\partial \omega)_{T_1,T_2}} 
\in  L^2 ([T_1, T_2], \mathbf{H}^{1/2} (\partial \omega)),
$$
cf. \cite[ch. 3, \S 7, property 7]{MikhX}. 
Moreover, by  \eqref{eq.v2}, we have 
$$
v = 0  \mbox{ on } \partial (\omega_{T_1,T_2}), 
\, \, 
\partial_ \nu v = 0  \mbox{ on } (\partial \omega)_{T_1,T_2}.
$$
Hence $v$ belongs to both the space  \eqref{eq.space.1} and the space
\begin{equation} \label{eq.space.2}
 C ( [T_1,T_2], \mathbf{H}^{1}_0(\omega) )
\cap L^2 ([T_1, T_2], \mathbf{H}^{2}_0 (\omega) ) \cap 
\mathbf{H}^{1}_0 (\omega_{T_1,T_2}) 
\cap \mathbf{H}^{2,1} (\omega_{T_1,T_2})  ,
\end{equation}
where $ \mathbf{H}^{s}_0(\omega) $ is the closure in  $ \mathbf{H}^{s}(\omega) $ of 
the space $\mathbf{C}^\infty_0 (\omega)$ consisting of infinitely smooth fields 
with compact supports in $\omega$. Now the statement of the lemma 
easily follows with the use of the standard regularisation, see, for instance,  
\cite[ch. 3, \S 5, \S 7,]{MikhX} for the isotropic Sobolev spaces. 
\end{proof}

Next, using lemma \ref{l.dense.c0} and fixing a sequence 
$\{ v_k \} \subset \mathbf{C}^\infty _0 
(\omega _{T_1, T_2})$ converging to the field 
$v$ in $\mathbf{H}^{2,1} (\omega _{T_1, T_2})$, we see that  
$$
\|u\|^2_{\mathbf{L}^2 (\omega _{T_1, T_2})} = 
(u , {\mathcal L}^* v )_{\mathbf{L}^2 (\omega _{T_1, T_2})} = 
\lim_{k \to + \infty }(u, {\mathcal L}^* v_ k )_{\mathbf{L}^2 (\omega _{T_1, T_2})}   
= 0,
$$
because ${\mathcal L} u =0 $ in $\omega _{T_1, T_2}$ in the sense of distributions. 
Thus,  $u \equiv 0$ in $\omega _{T_1, T_2}$, that was to be proved.

\textit{Necessity.} We use the arguments similar to 
that in \cite{J} for the case of the uniform approximation of the solutions 
to the heat equation, cf. also \cite{ShHeat} for the approximation in the Lebesgue 
space. Let the complement 
$\Omega \setminus  \omega$ has at least one compact component. 
As we noted before, since the domains  $\Omega, \omega$ has smooth boundaries, 
this component is the closure of a non-empty domain  $\omega^{(0)}$.
Moreover, the set $\omega \cup \overline{\omega^{(0)}}$ is a domain with smooth boundary 
in ${\mathbb R}^n$.

Fix a point  $(x_0,t_0)\in \omega^{(0)} \times (T_1,T_2)$. 

\begin{lem}
Let $A=0$. Then for each $\delta>0$ there is such a vector function 
 $v_0 \in S _{\mathcal L}({\mathbb R}^{n+1})$ that 
 $v_0(x_0, t_0) \ne 0$ and $v_0(x,t) =0$ for all $t$, $|t-t_0|\geq \delta$. 
\end{lem}

\begin{proof} For the heat operator $(\partial_ t - a \Delta)$ such a function 
$\hat{v}_0$ was constructed by A.N. Tikhonov \cite{Ti35}. Take a function
$\hat{v}_0$, depending on $t$ and $x_1$, only, and related to $a=2\mu+\lambda$, then 
the vector field $v_0$, with the first component $\hat{v}_0$ and all other components 
being zero, fits our requirements. 
\end{proof}

Next, there is an infinitely times differentiable function 
 $\phi$ supported in $\omega $ such that $\phi (x_0) \equiv 1$ in 
a neighbourhood  $U $ of the compact 
 $\overline \omega^{(0)} $.   Then the vector field $v_1 (x,t)= \phi(x) v_0(x,-t)$
is infinitely smooth in ${\mathbb R}^{n+1}$, supported in 
$\omega \times [t_0-\delta, t_0+\delta]$ and, moreover, 
$$
  {\mathcal L}^* v_1 (x,t)= (M_1 \phi (x) (M_2 v_0 (x,-t))  +  (L^* \phi (x)) v_0 (x,-t)
	\mbox{ im }   {\mathbb R}^{n+1}
$$
with some homogeneous first order differential operators $M_1$ and $M_2$, acting 
with respect to the space variables, only. 
In particular, as $ \nabla \phi =0$ in $U$, then 
\begin{equation}\label{eq.last}
  {\mathcal L}^* v_1 =0 \mbox{ in } U \times (T_1,T_2).
\end{equation}
Denote by $\Pi_0$ the orthogonal projection from $\mathbf{L}^2 (\omega_{T_1,T_2})$ 
onto  $\mathbf{L}^2_{\mathcal L} (\omega_{T_1,T_2})$. 

Using properties of the projection
$\Pi_0$, the function $v_1$ and the fundamental solution $\Phi$, 
we obtain for all $ (y,\tau) \not \in \overline{ \omega\times (T_1,T_2)}$:
$$
\int_{T_1}^{T_2} \int_{\omega} \Phi^* (x-y, t-\tau) 
(\Pi_0 {\mathcal L}^* v_1)  (x,y) dx \, dt =
$$
\begin{equation} \label{eq.non.ort}
\int_{T_1}^{T_2} \int_{\omega\cup \omega^{(0)}} 
\Phi^* (x-y, t-\tau) ({\mathcal L}^* v_1)  (x,y)  dx \, dt =v_1 (y,\tau).
\end{equation}
Therefore the function $\Pi_0 {\mathcal L}^* v_1 \in 
\mathbf{L}^2 _{\mathcal L}(\omega_{T_1,T_2})$ is not 
$\mathbf{L}^2 (\omega_{T_1,T_2})$-orthogonal to the columns 
$\Phi_j (x-x_0, t- t_0) \in L^2 _{\mathcal L}(\omega_{T_1,T_2})$
of the matrix $\Phi (x-x_0, t- t_0)$, but it is 
$L^2 (\omega_{T_1,T_2})$-orthogonal to the vector fields 
$\Phi_j (x- y, t- \tau) \in L^2 _{\mathcal L}(\omega_{T_1,T_2})$ 
for any vectors $(y,\tau) \not \in \overline {(\omega \cup \overline{\omega^{(0)}}) 
\times (T_1,T_2)}$.

If the function $u $ belongs to $S _{\mathcal L}(\overline{ \Omega_{T_1,T_2}}) $ then 
it belongs to $H^{2,1} _{\mathcal L}(\Omega'_{T'_1,T'_2}) $ 
for some numbers $T_1'<T_1<T_2<T_2'$ and a bounded domain 
$\Omega'\Supset \Omega$. Now the Green formula yields
\begin{equation*} 
\left.
\begin{aligned}
u(x, t) \mbox{ in } \Omega'_{T'_1,T'_2}  
\\
0 \mbox{ outside } \overline{\Omega'_{T'_1,T'_2}} 
\end{aligned}
\right\} \! = 
I_{\Omega',T'_1} (u)    + 
V _{\partial \Omega, T'_1} \left( 
B_1 u \right)  +  W_{\partial \Omega', T'_1} (u). 
\end{equation*} 
According to \eqref{eq.last} ${\mathcal L}^* v_1 =0$  in 
$ \omega^{(0)} \times (T_1,T_2)$, and then  
Fubini theorem and formulas \eqref{eq.non.ort} for 
$(y,\tau) \in (\partial \Omega' \times [T_1',T'_2]) \textcolor{red}{\cup} 
(\Omega' \times \{\textcolor{red}{T_1'}\})$ imply that
$$
(\Pi_0 {\mathcal L}^* v_1,u)_{\mathbf{L}^2 _{\mathcal L}(\omega_{T_1,T_2})} = 
({\mathcal L}^* v_1,I_{\Omega',T'_1} (u)    + 
V _{\partial \Omega, T'_1} \left( 
B_1 u \right)  +  W_{\partial \Omega', T'_1} (u))_{\mathbf{L}^2 
_{\mathcal L}(\omega_{T_1,T_2})} = 
$$
$$
\int_{\Omega'} u^* (y)\Big(
\int_{T_1}^{T_2} \int_{\omega\cup \omega^{(0)}} 
\Phi^* (x-y, t-T_1') ({\mathcal L}^* v_1)  (x,y)  dx \, dt \Big) dy + 
$$
$$
\int\limits_{T'_1}^t\int\limits_{\partial \Omega'} (B_1u)^* (y) C_{0,y}\Big(
\int_{T_1}^{T_2} \int_{\omega\cup \omega^{(0)}} 
\Phi^* (x-y, t-\tau) ({\mathcal L}^* v_1)  (x,y)  dx \, dt \Big) dy +
$$
$$
\int\limits_{T'_1}^t\int\limits_{\partial \Omega'} u^* (y) C_{1,y}\Big(
\int_{T_1}^{T_2} \int_{\omega\cup \omega^{(0)}} 
\Phi^* (x-y, t-\tau) ({\mathcal L}^* v_1)  (x,y)  dx \, dt \Big) dy =0.
$$
Thus, the non-zero vector function  $\Pi_0 {\mathcal L}^* v_1 \in 
\mathbf{L}^2 _{\mathcal L}(\omega_{T_1,T_2})$ is 
$\mathbf{L}^2 (\omega_{T_1,T_2})$-orthogonal to any vector field from  
$S _{\mathcal L}(\overline{ \Omega_{T_1,T_2}})$. This proves that 
$S _{\mathcal L}(\overline {\Omega_{T_1,T_2}})$ is not everywhere dense set in the space 
$\mathbf{L}^2 _{\mathcal L}(\omega_{T_1,T_2})$ if there is a compact
component of the set $\Omega \setminus \omega$ in $\Omega$.
\end{proof}

\begin{cor}[P.Yu. Vilkov, A.A. Shlpaunov] \label{c.dense.base0}
Let $s,k\in {\mathbb Z}_+$,  
$\omega \subset \Omega \Subset {\mathbb R}^n$, 
$\partial \omega \in C^2, \partial \Omega 
\in C^1$ and let the complement have no compact components
$\Omega \setminus \omega$ in $\Omega$. Then the spaces  
$C^\infty_{\mathcal L}(\overline {\Omega_{T_1,T_2}})$ and 
$H^{k,2s,s} _{\mathcal L}(\Omega_{T_1,T_2})$ 
are everywhere dense in  $L^{2} _{\mathcal L}(\omega_{T_1,T_2})$. 
\end{cor}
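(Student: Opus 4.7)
The plan is to derive this corollary directly from Theorem~\ref{t.dense.base} by exploiting the chain of inclusions \eqref{eq.emb.H}, since density of a subset forces density of any larger set contained in the ambient space. No new analytical input is required beyond what the theorem already provides.

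First I would invoke Theorem~\ref{t.dense.base}: the hypotheses on $\omega$, $\Omega$ and on the compact components of $\Omega\setminus\omega$ are exactly those assumed here, so $S_{\mathcal L}(\overline{\Omega_{T_1,T_2}})$ is dense in $\mathbf{L}^2_{\mathcal L}(\omega_{T_1,T_2})$ (the sufficiency half of the theorem does not use the condition $A=0$). Next I would record the continuous embeddings \eqref{eq.emb.H}, namely
\[
S_{\mathcal L}(\overline{\Omega_{T_1,T_2}}) \subset \mathbf{C}^{\infty}_{\mathcal L}(\overline{\Omega_{T_1,T_2}}) \subset \mathbf{H}^{k,2s,s}_{\mathcal L}(\Omega_{T_1,T_2}),
\]
together with the obvious observation that restriction from $\Omega_{T_1,T_2}$ to $\omega_{T_1,T_2}$ maps each of the three spaces continuously into $\mathbf{L}^2_{\mathcal L}(\omega_{T_1,T_2})$: for the first two spaces this is immediate from boundedness on $\overline{\Omega_{T_1,T_2}}$, and for $\mathbf{H}^{k,2s,s}_{\mathcal L}(\Omega_{T_1,T_2})$ it follows from $\mathbf{H}^{k,2s,s}(\Omega_{T_1,T_2})\subset \mathbf{L}^2(\Omega_{T_1,T_2})$ combined with the restriction operator to $\omega_{T_1,T_2}$.

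Finally, since the restriction to $\omega_{T_1,T_2}$ of $S_{\mathcal L}(\overline{\Omega_{T_1,T_2}})$ is contained in the restrictions of the other two spaces, and since this smallest set is already dense in $\mathbf{L}^2_{\mathcal L}(\omega_{T_1,T_2})$, the larger subsets are \emph{a fortiori} dense there. This yields both claims of the corollary at once. There is no real obstacle; the content of the corollary is purely the packaging of Theorem~\ref{t.dense.base} together with the standard embeddings \eqref{eq.emb.H}.
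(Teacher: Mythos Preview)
Your argument is correct and matches the paper's own proof, which simply states that the corollary follows immediately from Theorem~\ref{t.dense.base} via the embeddings~\eqref{eq.emb.H}. You have merely spelled out the elementary inclusion reasoning in slightly more detail.
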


\begin{proof} Follows immediately from theorem \ref{t.dense.base}, 
because of embeddings \eqref{eq.emb.H}.
\end{proof}

\section{Carleman's formulas}

In this section we consider the problem of recovering solutions to the parabolic 
Lam\'e type system in a cylinder domain via their  Cauchy data on the lateral side 
of the cylinder, see, for instance, \cite{PuSh15}. More precisely, 
let $\Gamma$ be a relatively open connected subset 
of the surface $\partial \Omega$ with a smooth boundary $\partial \Gamma$. 
  
\begin{prob} \label{pr.Cauchy}
Given vector fields $u_{1}
\in \mathbf{C}^{1,0}(\overline \Gamma \times  [0,T])$, 
$u_{2} \in \mathbf{C}^{0,0}(\overline \Gamma \times [0,T])$, $f 
\in \mathbf{C}^{0,0}(\overline \Omega 
\times [0,T]) $, find a vector field		$u\in \mathbf{C}^{2,1}(\Omega \times  [0,T]) \cap 
\mathbf{C}^{1,0}(\Gamma \times [0,T])$, 
	satisfying 
\begin{equation} \label{eq.Cauchy}
\left\{	
\begin{array}{lll}
{\mathcal L}u = f & \rm{in} &  \Omega  \times  (0,T) \\
u(x,t) = u_{1}(x,t) &  \rm{on} &  \Gamma  \times  (0,T), \\
B_1u(x,t) = u_{2}(x,t) &  \rm{on} &  \Gamma  \times  (0,T) .\\
\end{array}
\right.
\end{equation}
\end{prob}

It was proved in \cite{PuSh15} that Problem  \ref{pr.Cauchy} 
has no more than one solution (if relative interior of $\Gamma$ 
is not empty), it is densely solvable  (if the relative interior of 
$\partial \Omega \setminus \Gamma$ on  $\partial \Omega $ is not empty) 
and it is ill-posed in the sense of Hadamard and 
Also a solvability criterion in H\"older space  was obtained 
in \cite{PuSh15} for  Problem  \ref{pr.Cauchy} but, unfortunately, 
there were no Hilbert space methods involved for investigation 
of solvability conditions and the  construction of Carleman's formulas for 
the problem. We showed how to do the last two steps for a similar 
problem related to the heat equation in paper \cite{KuSh}; in the this section we apply 
these ideas to the Lam\'e type operator. For this purpose we 
need to increase essentially the smoothness of the surface  $\partial \Omega$ 
and the boundary data $u_1$, $u_2$. More precisely, we need the following lemma.  

\begin{lem} \label{l.ext}
Let $\gamma\in (0,1)$, $\partial \Omega \in C^{3+\gamma}$ and let $\Gamma$ be 
relatively open non-empty connected set on 
$\partial \Omega$ with boundary 
 $\partial \Gamma \in C^{2+\gamma}$. If 
 $u_1 \in C^{2,1,\gamma,\gamma/2} 
(\overline {\Gamma_T})$, $u_2 \in C^{2,1,\gamma,\gamma/2} 
(\overline {\Gamma_T})$ then there exist vector fields 
$\tilde u_j\in  \mathbf{C}^{2,1,\gamma,\gamma/2} (\partial \Omega_T)$ 
and $\tilde u \in \mathbf{C}^{2,1,\gamma,\gamma/2} (\overline{\Omega_T)}$
such that  $\tilde u_j =u_j $ on $\overline {\Gamma_T}$, $j=1,2$, and   
 $\tilde u =\tilde u_1 $ on $(\partial \Omega)_T$, 
$B_1 u =\tilde u_2 $ on $(\partial \Omega)_T$.
\end{lem}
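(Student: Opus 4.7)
The plan is to build $\tilde u_1,\tilde u_2$ first by extending across $\partial\Gamma$ inside the lateral surface, and then to produce $\tilde u$ by a first-order Taylor expansion in the normal variable in a tubular neighbourhood of $\partial\Omega$, multiplied by a cut-off. Throughout, the time variable $t$ will enter only as a passive parameter, so the anisotropic nature of the Hölder class creates no new issues beyond what is already standard for the isotropic $C^{2+\gamma}$ scale.

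\textbf{Step 1 (extension within the lateral boundary).} Using $\partial\Omega\in C^{3+\gamma}$ and $\partial\Gamma\in C^{2+\gamma}$, I would cover a neighbourhood of $\partial\Gamma$ in $\partial\Omega$ by finitely many $C^{2+\gamma}$ charts in which $\partial\Gamma$ flattens to a piece of $\{x_n=0\}$ and $\Gamma$ corresponds to $\{x_n<0\}$. Inside each chart I would extend the local restrictions of $u_j$ across $\{x_n=0\}$ by a Hestenes/Whitney reflection, namely a finite linear combination of shifted copies $u_j(x',-\mu_k x_n,t)$ whose coefficients match the two-jet of $u_j$ along $\{x_n=0\}$; this preserves $C^{2,\gamma}$ in the spatial variables while leaving the $t$-dependence intact. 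I would then glue these local extensions by a partition of unity subordinate to the chart cover and keep the original $u_j$ on a neighbourhood of $\overline\Gamma$ by means of a cut-off. The result is $\tilde u_1,\tilde u_2\in\mathbf{C}^{2,1,\gamma,\gamma/2}((\partial\Omega)_T)$ with $\tilde u_j=u_j$ on $\overline{\Gamma_T}$.

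\textbf{Step 2 (extension into the cylinder).} Since $\partial\Omega\in C^{3+\gamma}$, I would take a one-sided tubular neighbourhood $\mathcal N\subset\overline\Omega$ and the $C^{2+\gamma}$ diffeomorphism $\Psi(y,r)=y-r\nu(y)$, $\nu$ being the exterior unit normal, $r\in[0,r_0]$. Decomposing the first-order operator as $B_1=B_1^{\mathrm{tan}}+M(y)\partial_\nu$ with $B_1^{\mathrm{tan}}$ tangential and $M(y)$ the value of the principal symbol on $\nu(y)$ (invertible by assumption), I would define
\begin{equation*}
\psi(y,t):=M(y)^{-1}\bigl(\tilde u_2(y,t)-B_1^{\mathrm{tan}}\tilde u_1(y,t)\bigr),
\end{equation*}
choose $\chi\in C^\infty([0,\infty))$ with $\chi\equiv 1$ near $0$, $\chi'(0)=0$, $\operatorname{supp}\chi\subset[0,r_0)$, and set
\begin{equation*}
\tilde u(\Psi(y,r),t):=\chi(r)\bigl(\tilde u_1(y,t)-r\,\psi(y,t)\bigr),
\end{equation*}
extended by $0$ outside $\mathcal N$. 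From $\chi(0)=1$, $\chi'(0)=0$ and $\partial_r\Psi=-\nu$ one reads off $\tilde u|_{(\partial\Omega)_T}=\tilde u_1$ and $B_1\tilde u|_{(\partial\Omega)_T}=B_1^{\mathrm{tan}}\tilde u_1+M\psi=\tilde u_2$.

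\textbf{Regularity and main obstacle.} The class $\mathbf{C}^{2,1,\gamma,\gamma/2}$ is stable under multiplication by $C^{2+\gamma}$ functions of $x$ independent of $t$, under $C^{2+\gamma}$ spatial changes of variables, and under one tangential differentiation applied to a $\mathbf{C}^{2,1,\gamma,\gamma/2}$ field; since $\Psi$, $M(y)^{-1}$ and the coefficients of $B_1^{\mathrm{tan}}$ are $C^{2+\gamma}$ in $y$ and the cut-off $\chi$ is smooth, $\tilde u$ would lie in $\mathbf{C}^{2,1,\gamma,\gamma/2}(\overline{\Omega_T})$. The main obstacle is precisely the anisotropic bookkeeping: verifying that the Hestenes reflection in Step 1 and the pullback by $\Psi^{-1}$ in Step 2, both of which only touch the spatial variables, leave the $\gamma/2$ time Hölder modulus intact. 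Since $t$ is a parameter in both operations, this reduces to routine isotropic $C^{2+\gamma}$ estimates applied uniformly in $t$, plus the analogous check for $\partial_t\tilde u$.
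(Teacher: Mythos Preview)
Your Step 1 is standard and correct. The gap is in Step 2, and it is a genuine regularity problem rather than mere anisotropic bookkeeping.

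You need $\tilde u\in\mathbf C^{2,1,\gamma,\gamma/2}(\overline{\Omega_T})$, so in particular every second spatial derivative of $\tilde u$ must be $C^{\gamma}$. In your formula $\tilde u(\Psi(y,r),t)=\chi(r)\bigl(\tilde u_1(y,t)-r\psi(y,t)\bigr)$ the dangerous derivatives are the tangential--tangential ones: they produce the term $r\chi(r)\,\partial_y^2\psi$. But $\psi=M^{-1}\bigl(\tilde u_2-B_1^{\mathrm{tan}}\tilde u_1\bigr)$ contains one tangential derivative of $\tilde u_1$; hence $\psi$ is only $C^{1+\gamma}$ in $y$ (even though $\tilde u_2$ is $C^{2+\gamma}$), so $\partial_y^2\psi$ need not exist, and the factor $r$ does not help. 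In the flat model this is simply the fact that $x_n\,g(x')$ is not $C^{2}$ when $g\in C^{1+\gamma}\setminus C^{2}$. Your claim that the class is ``stable under one tangential differentiation'' is precisely what fails here: applying $B_1^{\mathrm{tan}}$ drops you from $C^{2+\gamma}$ to $C^{1+\gamma}$ in the spatial variables, and a first-order Taylor polynomial in the normal direction cannot recover the lost order.

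The paper avoids this by citing \cite{KuSh} and replacing the bi-Laplacian $\Delta^2$ with the strongly elliptic operator $L^2$ and the Dirichlet pair $(1,\partial_\nu)$ with $(I_n,B_1)$: one solves the corresponding Dirichlet problem with data $(\tilde u_1,\tilde u_2)$ and reads off $\tilde u\in\mathbf C^{2,1,\gamma,\gamma/2}(\overline{\Omega_T})$ from Schauder estimates, the time variable entering only as a parameter. If you wish to keep an explicit construction, you must replace the naive term $-r\psi$ by a genuine right inverse of the two-trace map in the $C^{2+\gamma}$ scale (e.g.\ via a higher-order elliptic solver, or by mollifying $\psi$ on a scale comparable to $r$), and then verify the spatial $C^{2+\gamma}$ bound; the time regularity is, as you say, harmless.
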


\begin{proof} It is quite similar to \cite[lemma 4]{KuSh}; the only difference 
is that instead of the Dirichlet pair $(1,\partial_\nu)$ on  $\partial \Omega$ 
and the the bi-Laplacian $\Delta^2$ one has to consider the Dirichlet pair $(1,B_1)$, 
and the strongly elliptic operator $L^2$. 
\end{proof}
 
Under the assumptions of lemma  \ref{l.ext}, we set 
\begin{equation} \label{eq.tildeF}
\tilde {\mathcal F}=G_{\Omega, 0} (f)+V_{\partial \Omega,0} (\tilde u_2) + 
W_{\partial \Omega,0} (\tilde u_1) + I_{\Omega,0} (\tilde u).
\end{equation}

Assuming that $\Gamma$ is a relatively open connected set on 
 $\partial \Omega$, we find an open set 
 $\Omega^+ \subset {\mathbb R}^n$ such that the set 
$D=\Omega\cup\Gamma\cup\Omega^+$ is a domain with piece-wise smooth boundary;
it is convenient to denote  $\Omega^- = \Omega$. Then for a vector function 
$v$ on $D_T$ we denote by  $v^+$ its restriction to $\Omega^+_T$ and, similarly,
we denote by   $v^-$  its restriction to $\Omega_T$. It is natural to denote the 
boundary values (traces)  $v^\pm$ on $\Gamma_T$, if defined, by  $v^\pm_{|\Gamma_T}$. 

\begin{thm}[Kurilenko I.A.] \label{c.sol.n}
Let $\partial \Omega \in C^{3+\gamma}$, and let $\Gamma$ be a relatively open 
connected set on  $\partial \Omega$ with the boundary 
 $\partial \Gamma $ of the class $ C^{2+\gamma}$, such that 
$\partial \Omega\setminus \Gamma $ have non-empty interior on $\partial \Omega$. If 
$f\in \mathbf{C}^{0,0,\gamma,\gamma/2}
(\overline{\Omega_T})$,   $u_1\in \mathbf{C}^{2,1,\gamma,\gamma/2}(\overline{\Gamma_T})$, 
$u_2\in \mathbf{C}^{2,1,\gamma,\gamma/2}(\overline{\Gamma_T})$  then  
problem \ref{pr.Cauchy} is solvable in the space 
 $\mathbf{C}^{2,1,\gamma,\gamma/2}(\Omega_T)\cap \mathbf{C}^{1,0,
\gamma,\gamma/2}(\Omega_T \cup \Gamma_T) \cap \mathbf{H}^{2,1}(\Omega_T)$ 
if and only if there exists a vector field 
$\tilde F\in \mathbf{C}^\infty (D_T) \cap \mathbf{H}^{2,1}(D_T) \cap S_{\mathcal L} (D_T)$, 
satisfying $\tilde F=\tilde{\mathcal F}$ in $\Omega^+_T$.
\end{thm}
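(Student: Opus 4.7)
\emph{Proof proposal.} The strategy is to invoke Green's formula (Lemma \ref{l.Green}) twice: once for the extension $\tilde u$ supplied by Lemma \ref{l.ext}, and once for the (candidate) solution $u$. Applying \eqref{eq.Green} to $\tilde u$ and using the identities $\tilde u=\tilde u_1$ and $B_1\tilde u=\tilde u_2$ on $(\partial\Omega)_T$ collapses the surface potentials in \eqref{eq.tildeF} and yields the key identity
$$
\tilde{\mathcal F}=G_{\Omega,0}(f-{\mathcal L}\tilde u)+\chi_{\Omega_T}\tilde u
\quad\mbox{in }{\mathbb R}^{n+1}.
$$
Writing $\Psi:=G_{\Omega,0}(f-{\mathcal L}\tilde u)$, which is continuous across $(\partial\Omega)_T$ together with $B_1\Psi$, this identity shows that $\tilde{\mathcal F}$ jumps across $\Gamma_T$ by exactly $\tilde u_1$, that $B_1\tilde{\mathcal F}$ jumps by $\tilde u_2$, and that ${\mathcal L}\tilde{\mathcal F}=0$ in $\Omega^+_T$.

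\emph{Necessity.} Given a solution $u$ of Problem \ref{pr.Cauchy}, I would apply \eqref{eq.Green} to $u$ with ${\mathcal L}u=f$ and subtract from \eqref{eq.tildeF}. Since the Green representation of $u$ vanishes outside $\overline{\Omega_T}$, this produces the natural candidate
$$
\tilde F:=V_{\partial\Omega,0}(\tilde u_2-B_1 u)
+W_{\partial\Omega,0}(\tilde u_1-u)
+I_{\Omega,0}(\tilde u-u)
$$
on $D_T$, which satisfies $\tilde F=\tilde{\mathcal F}$ in $\Omega^+_T$ by construction. The boundary conditions $u=\tilde u_1$ and $B_1 u=\tilde u_2$ on $\Gamma_T$ imply that the surface densities $\tilde u_1-u$ and $\tilde u_2-B_1 u$ vanish on $\Gamma_T$, so the surface integrations take place effectively on $(\partial\Omega\setminus\Gamma)_T$, which is disjoint from $D$. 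Hence the kernel $\Phi(x-y,t-\tau)$ is smooth for $(x,t)\in D_T$ at every $(y,\tau)$ in the support of the densities, and differentiation under the integral gives $\tilde F\in\mathbf{C}^\infty(D_T)\cap S_{\mathcal L}(D_T)$; the bound $\tilde F\in\mathbf{H}^{2,1}(D_T)$ follows from the classical mapping properties of parabolic potentials acting on Hölder data.

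\emph{Sufficiency.} Conversely, given such $\tilde F$, set $u:=\tilde{\mathcal F}|_{\Omega_T}-\tilde F|_{\Omega_T}$. The key identity rewrites this as $u=\Psi+\tilde u-\tilde F$ in $\Omega_T$, so
$$
{\mathcal L}u=(f-{\mathcal L}\tilde u)+{\mathcal L}\tilde u-0=f.
$$
Since $\tilde F\in\mathbf{C}^\infty(D_T)$ is smooth across $\Gamma_T$ and coincides with $\tilde{\mathcal F}$ in $\Omega^+_T$, its trace on $\Gamma_T$ equals $\tilde{\mathcal F}^+|_{\Gamma_T}=\Psi|_{\Gamma_T}$; the jump analysis of the first paragraph then gives on $\Gamma_T$ that $u=(\Psi+\tilde u_1)-\Psi=u_1$ and $B_1 u=(B_1\Psi+\tilde u_2)-B_1\Psi=u_2$. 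The regularity of $u$ in $\mathbf{C}^{2,1,\gamma,\gamma/2}(\Omega_T)\cap\mathbf{C}^{1,0,\gamma,\gamma/2}(\Omega_T\cup\Gamma_T)\cap\mathbf{H}^{2,1}(\Omega_T)$ follows from the regularity of $\tilde u$, of $\tilde F$, and of the Newtonian-type volume potential $\Psi$.

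\emph{Main obstacle.} The algebraic skeleton is transparent; the substantive work lies in the careful bookkeeping of traces and jump relations of the parabolic single- and double-layer potentials $V_{\partial\Omega,0}$ and $W_{\partial\Omega,0}$ across $\Gamma_T$, and in verifying that every constructed object actually lies in the anisotropic Hölder class $\mathbf{C}^{2,1,\gamma,\gamma/2}$ and in $\mathbf{H}^{2,1}$. This is where the $C^{3+\gamma}$-smoothness of $\partial\Omega$ and $\partial\Gamma$, the extension Lemma \ref{l.ext}, and the anisotropic parabolic regularity theory enter in an essential way, in particular near the initial slice $t=0$ where the potential $I_{\Omega,0}$ produces the most delicate estimates.
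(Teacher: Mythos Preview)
Your argument is essentially correct and, at its core, coincides with the paper's: both derive the identity $\tilde{\mathcal F}=G_{\Omega,0}(f-{\mathcal L}\tilde u)+\chi_{\Omega_T}\tilde u$ from Green's formula, and both identify the solution as $u=\tilde{\mathcal F}-\tilde F$ in $\Omega_T$. The route, however, is genuinely different. The paper does not carry out the jump analysis you describe; instead it invokes \cite[theorem~5]{PuSh15}, where the solvability criterion was already established in terms of the ``raw'' potential ${\mathcal F}=G_{\Omega,0}(f)+V_{\overline\Gamma,0}(u_2)+W_{\overline\Gamma,0}(u_1)$, and then reduces the present statement to that one by observing that $\tilde{\mathcal F}-{\mathcal F}=V_{\partial\Omega\setminus\Gamma,0}(\tilde u_2)+W_{\partial\Omega\setminus\Gamma,0}(\tilde u_1)+I_{\Omega,0}(\tilde u)\in S_{\mathcal L}(D_T)$. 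Your direct verification via jump relations of the layer potentials is more self-contained and avoids the dependence on \cite{PuSh15}; the paper's reduction, on the other hand, is shorter and sidesteps the bookkeeping you flag in your ``main obstacle'' paragraph.

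One point in the necessity direction deserves correction. You write that $\tilde F\in\mathbf{H}^{2,1}(D_T)$ ``follows from the classical mapping properties of parabolic potentials acting on H\"older data''. But the densities $\tilde u_1-u$ and $\tilde u_2-B_1u$ are not H\"older on $(\partial\Omega\setminus\Gamma)_T$: the solution $u$ is only assumed to lie in $\mathbf{H}^{2,1}(\Omega_T)$, so its traces there are merely Sobolev-regular. The clean way to obtain the $\mathbf{H}^{2,1}$ bound is the one the paper uses: since your $\tilde F$ equals $\tilde{\mathcal F}-\chi_{\Omega_T}u$, and since $\tilde{\mathcal F}\in\mathbf{C}^{2,1,\gamma,\gamma/2}(\overline{\Omega^{\pm}_T})$ by the key identity while $u\in\mathbf{H}^{2,1}(\Omega_T)$, one gets $\tilde F\in\mathbf{H}^{2,1}(\Omega^{\pm}_T)$; together with $\tilde F\in C^\infty(D_T)$ this yields $\tilde F\in\mathbf{H}^{2,1}(D_T)$.
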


\begin{proof} We slightly modify the arguments from \cite[theorem 5]{PuSh15}, where 
a solvability criterion for problem \ref{pr.Cauchy} was obtained in terms of the potential 
\begin{equation} \label{eq.FF1}
{\mathcal  F} (x,t) = G_{\Omega, 0} (f)+V_{\overline{\Gamma},0} (u_2) +
W_{\overline{\Gamma},0} (u_1) .
\end{equation} 
Namely, it was proved in  \cite[theorem 5]{PuSh15} that problem 
is solvable \ref{pr.Cauchy} if and only if there is a vector field 
$ F\in \mathbf{C}^{2,1}(D_T) \cap S_{\mathcal L} (D_T)$ satisfying
$F^+= {\mathcal F}^+$  in  $\Omega^+_T$ and, moreover, if  the (unique) solution  $u$ exists 
then it is given by the following formula 
\begin{equation} \label{eq.sol}
u(x,t) = {\mathcal F}(x,t) - F(x,t)\,\, (x,t) \in \Omega_T,
\end{equation} 
and the (unique) extension $F$ of the potential ${\mathcal F}$, if exists, 
can be expressed via $u$ as 
\begin{equation} \label{eq.FF2}
F(x,t)  = {\mathcal F}(x,t) - \chi_{\Omega_T} u(x,t) \,\, (x,t) \in D_T,
\end{equation} 
where $\chi_{\Omega_T}$ is the characteristic function of the domain $\Omega_T$. 
Unfortunately, the potential  ${\mathcal F}$ is not regular enough ne the surface 
$\partial \Gamma$. However, the potential $\tilde {\mathcal F}$, defined 
with the use of lemma \ref{l.ext}, helps to improve the situation.

More precisely, by Green formula  \eqref{eq.Green}, we obtain 
  $\tilde{\mathcal F} = G_{\Omega, 0} (f-{\mathcal L}\tilde u) + \chi_{\Omega_T} \tilde u$.
Since under the assumptions of the theorem,  
$f, {\mathcal L}\tilde u \in C^{0,0,\gamma,\gamma/2}
(\overline{\Omega_T})$,then the results \cite[ch. 4, \S\S 11-14]{LadSoUr67}, 
\cite[ch. 1, \S 3]{frid} imply that 
\begin{equation} \label{eq.G1} 
G_{\Omega, 0} (f-{\mathcal L} \tilde u) \in C^{2,1,\gamma,\gamma/2}(\overline{\Omega^\pm_T}) 
\cap  C^{1,0,\gamma,\gamma/2}(D_T)  ,
\end{equation} 
i.e. $\tilde{\mathcal F} \in C^{2,1,\gamma,\gamma/2} (\overline{\Omega^\pm_T})$. 
On the other hand, 
\begin{equation} \label{eq.difference}
\tilde{\mathcal F} -{\mathcal F} = 
V_{\partial \Omega\setminus \Gamma,0} (\tilde u_2) + 
W_{\partial \Omega \setminus \Gamma,0} (\tilde u_1) + I_{\Omega,0} (\tilde u).
\end{equation}
This means that the vector field  $\tilde{\mathcal F} -{\mathcal F}$ satisfies
equation ${\mathcal L} (\tilde{\mathcal F} -{\mathcal F}) =0$ in $D_T$, and therefore
the potential ${\mathcal F}$ extends from $\Omega^+_T$ onto $D_T$ as a solution 
to the operator ${\mathcal L}$ if and only if the potential $\tilde {\mathcal F}$ extends 
from $\Omega^+_T$ onto $D_T$ as a solution to the operator ${\mathcal L}$. 

If problem \ref{pr.Cauchy} is solvable in the space 
$\mathbf{C}^{2,1,\gamma,\gamma/2}(\Omega_T)\cap \mathbf{C}^{1,0,
\gamma,\gamma/2}(\Omega_T \cup \Gamma_T) \cap \mathbf{H}^{2,1}(\Omega_T)$ then
formulas  \eqref{eq.FF1}, \eqref{eq.FF2} and \eqref{eq.difference} imply  
\begin{equation*}
\tilde F = \tilde{\mathcal F} -\chi_{\Omega_T} u \in \mathbf{H}^{2,1} (\Omega^\pm_T)
\mbox{ and } {\mathcal L} \tilde F =0 \mbox{ in } D_T.
\end{equation*}
As $\tilde F \in \mathbf{H}^{2,1} (\Omega^\pm_T)\cap C^\infty (D_T)$ (this follows, for 
instance, from \cite[ch. VI, \S 1, theorem 1]{MikhX}), we conclude that  
$\tilde F \in \mathbf{H}^{2,1} (D_T) \cap S_{\mathcal L} (D_T)$.

If there is a vector field  $\tilde F \in \mathbf{H}^{2,1} (D_T) \cap S_{\mathcal L} (D_T)$, 
coinciding  with the potential  $\tilde{\mathcal F}$ in $\Omega^+_T$, then the potential  
${\mathcal F}$ extends from $\Omega^+_T$ onto $D_T$ as a solution 
to the operator ${\mathcal L}$, i.e. problem \ref{pr.Cauchy} is solvable.
 
Moreover, it follows from formulas \eqref{eq.sol} and
\eqref{eq.difference} that in  $D_T$ we have 
\begin{equation} \label{eq.sol.tilde}
U= {\mathcal F} - F = \tilde {\mathcal F} - \tilde F \in 
\mathbf{H}^{2,1} (\Omega^\pm_T),
\end{equation}
and then $U^-$ is the unique solution to problem  \ref{pr.Cauchy} in the space 
$\mathbf{C}^{2,1,\gamma,\gamma/2}(\Omega_T)\cap \mathbf{C}^{1,0,
\gamma,\gamma/2}(\Omega_T \cup \Gamma_T) \cap \mathbf{H}^{2,1} (\Omega_T)$ 
by \cite[theorem 5]{PuSh15}. 
\end{proof}

On the other hand, theorem \ref{t.dense.base} allows to prove the existence 
of a basis with the double orthogonality property in the spaces of solutions 
to the operator $\mathcal L$ that we will use in order 
to construct formulas for the precise and approximate solutions 
to problem \ref{pr.Cauchy}.

\begin{cor}[Kurilenko I.A.] \label{c.bdo} 
Let $s\in \mathbb N$, $k \in {\mathbb Z}_+$, $\partial \Omega\in  C^1$ 
and let  $\omega$ be a relatively compact  
subdomain in $\Omega \Subset {\mathbb R}^n$ such that 
$\partial \omega \in C^2$ and the complement $\Omega\setminus \omega$ have no   
compact components in $\Omega$. Then there is an orthonormal basis 
 $\{ b_\nu\}$ is the space $\mathbf{H}^{k,2s,s}_{\mathcal L} 
(\Omega_{T_1,T_2})$ such that its restriction 
$\{ b_{\nu|\omega_{T_1,T_2}}\}$ to $\omega_{T_1,T_2}$ is on orthonormal  
basis in the space  $\mathbf{L}^{2}_{\mathcal L} (\omega_{T_1,T_2})$. 
\end{cor}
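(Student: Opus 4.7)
\emph{Plan.} I would follow the standard Aizenberg--Kytmanov scheme by reducing the construction to the spectral analysis of a single restriction map. Set
\[
R\colon \mathbf{H}^{k,2s,s}_{\mathcal L}(\Omega_{T_1,T_2}) \longrightarrow \mathbf{L}^2_{\mathcal L}(\omega_{T_1,T_2}), \qquad Ru := u|_{\omega_{T_1,T_2}},
\]
which is obviously bounded. Injectivity is elementary: as noted in Section~\ref{s.1}, solutions to $\mathcal L$ are real analytic in $x$, so if $Ru=0$ then, for each fixed $t\in(T_1,T_2)$, $u(\cdot,t)$ vanishes on the nonempty open set $\omega$ and hence on all of the connected domain $\Omega$. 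Density of the range of $R$ is precisely the content of Corollary~\ref{c.dense.base0}, which in turn relies on the hypothesis that $\Omega\setminus\omega$ has no compact components in $\Omega$.

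The substantive analytic input is compactness of $R$. Since $\omega\Subset\Omega$ in the spatial variable and $\mathcal L$ is strongly uniformly parabolic with constant principal (in fact, constant) coefficients, a Caccioppoli-type interior estimate with a spatial cutoff supported between $\omega$ and $\Omega$ gives, for every spatial multi-index $\alpha$,
\[
\|\partial_x^\alpha u\|_{\mathbf{L}^2(\omega_{T_1,T_2})} \leq C_\alpha\,\|u\|_{\mathbf{L}^2(\Omega_{T_1,T_2})}
\]
uniformly up to the time endpoints, and temporal derivatives are recovered by iterating the identity $\partial_t u=\mu\Delta u+(\mu+\lambda)\nabla\operatorname{div} u-\sum_{j}A_j\partial_j u-A_0 u$. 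Thus $R$ factors continuously through $\mathbf{H}^{2,1}(\omega_{T_1,T_2})$, and the Rellich compact embedding $\mathbf{H}^{2,1}(\omega_{T_1,T_2})\hookrightarrow\mathbf{L}^2(\omega_{T_1,T_2})$ yields compactness of $R$.

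Once $R$ is compact, injective, and of dense range, the corollary follows from the spectral theorem applied to $R^*R$ on $\mathbf{H}^{k,2s,s}_{\mathcal L}(\Omega_{T_1,T_2})$: this operator is compact, self-adjoint, and strictly positive, so it admits an orthonormal eigenbasis $\{b_\nu\}$ with eigenvalues $\lambda_\nu>0$, $\lambda_\nu\to 0$. The identity
\[
(Rb_\mu,Rb_\nu)_{\mathbf{L}^2(\omega_{T_1,T_2})}=(R^*Rb_\mu,b_\nu)_{\mathbf{H}^{k,2s,s}(\Omega_{T_1,T_2})}=\lambda_\mu\,\delta_{\mu\nu}
\]
shows that the restricted family $\{Rb_\nu\}$ is orthogonal in $\mathbf{L}^2_{\mathcal L}(\omega_{T_1,T_2})$; it is complete there because the range of $R$ is dense. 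A final rescaling by $\sqrt{\lambda_\nu}$ produces the announced doubly orthonormal system.

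The main obstacle is the compactness step: contrary to typical parabolic interior regularity statements, the time intervals of $\omega_{T_1,T_2}$ and $\Omega_{T_1,T_2}$ coincide here, so any argument that shrinks in the time variable is forbidden. The rescue is that $\mathcal L$ has constant principal coefficients and that the spatial cutoff between $\omega$ and $\Omega$ can be taken independent of $t$, which lets the Caccioppoli bounds extend cleanly up to $t=T_1$ and $t=T_2$.
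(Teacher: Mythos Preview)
Your overall architecture---restriction map $R$, injectivity via spatial analyticity, dense range via Corollary~\ref{c.dense.base0}, compactness, then the spectral theorem for $R^*R$---matches the paper's proof exactly. The gap is in the compactness step. The Caccioppoli bound you write,
\[
\|\partial_x^\alpha u\|_{\mathbf{L}^2(\omega_{T_1,T_2})} \leq C_\alpha\,\|u\|_{\mathbf{L}^2(\Omega_{T_1,T_2})},
\]
is false when the time intervals coincide, and a purely spatial cutoff does not rescue it. For the heat equation on $(-1,1)\times(0,1)$ the family $u_n(x,t)=e^{n^2(1-t)}\sin(nx)$ already gives $\|\partial_x u_n\|_{L^2((-1/2,1/2)\times(0,1))}\big/\|u_n\|_{L^2((-1,1)\times(0,1))}\sim n$: shrinking only in space does not damp high spatial frequencies of parabolic solutions, because the smoothing acts forward in time, not instantaneously. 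So the ``rescue'' in your last paragraph does not go through as stated.

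The repair is much simpler than any Caccioppoli argument and is exactly what the paper does: since $s\ge 1$, the source space $\mathbf{H}^{k,2s,s}_{\mathcal L}(\Omega_{T_1,T_2})$ is already contained in $\mathbf{H}^{2,1}(\Omega_{T_1,T_2})$, so no interior estimate is needed to factor $R$ through $\mathbf{H}^{2,1}$. Compactness of $R$ then reduces to the compact embedding $\mathbf{H}^{2,1}(\Omega_{T_1,T_2})\hookrightarrow \mathbf{L}^2(\Omega_{T_1,T_2})$, which the paper obtains via the Aubin--Lions lemma: view $\mathbf{H}^{2,1}(\Omega_{T_1,T_2})$ inside $\{v\in L^2(T_1,T_2;\mathbf{H}^2(\Omega)):\partial_t v\in L^2(T_1,T_2;\mathbf{L}^2(\Omega))\}$ and invoke Rellich--Kondrashov for $\mathbf{H}^2(\Omega)\hookrightarrow\mathbf{L}^2(\Omega)$. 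With this correction your argument and the paper's coincide.
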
 

\begin{proof} By the definition, the space 
$\mathbf{H}^{k,2s,s} _{\mathcal L} (\Omega_{T_1,T_2})$  is embedded 
continuously into the space  
 $\mathbf{L}^2 _{\mathcal L} (\omega_{T_1,T_2})$.  We denote by $R_{\Omega,\omega}$ the 
natural embedding operator  
$$
R_{\Omega,\omega}: \mathbf{H}^{k,2s,s} _{\mathcal L}(\Omega_{T_1,T_2})\to 
\mathbf{L}^2 _{\mathcal L}(\omega_{T_1,T_2}).
$$ 
The analyticity of solutions to the operator  $\mathcal L$ with respect to the space 
variables implies that the operator $R_{\Omega,\omega}$ is injective.
Besides, it follows from theorem \ref{t.dense.base} that the range of the operator 
$R_{\Omega,\omega}$ is everywhere dense in the space 
$L^2 _{\mathcal L}(\omega_{T_1,T_2})$. 

By Fubini theorem, anisotropic Sobolev space   
$\mathbf{H}^{2,1} _{\mathcal L}(\Omega_{T_1,T_2})$ is embedded continuously 
to the Bochner space 
${\mathcal B}((T_1,T_2, \mathbf{H}^2(\Omega),\mathbf{L}^2 (\Omega))$, consisting of
mappings $v: [T_1,T_2] \to \mathbf{H}^2  (\Omega)$ such that 
$\partial_t v\in \mathbf{L}^2 (\Omega)$, see \cite[ch. 1, \S 5]{Lion69}. 
By Rellich-Kondrashov theorem the embedding  
$\mathbf{H}^2 (\Omega) \to L^2 (\Omega)$ is compact. Using 
famous compact embedding theorem for the Bochner type spaces o 
(see, for example, \cite[ch. 1, \S 5, theorem 5.1]{Lion69}), we conclude that 
the space ${\mathcal B}((T_1,T_2, \mathbf{H}^2(\Omega),
\mathbf{L}^2 (\Omega))$ is embedded compactly 
into  $\mathbf{L}^2 ((T_1,T_2),\Omega) = 
\mathbf{L}^2 (\Omega_{T_1, T_2})$. Thus, the space 
$\mathbf{H}^{2,1} _{\mathcal L}(\Omega_{T_1, T_2})$  is embedded compactly into 
$\mathbf{L}^2_{\mathcal L} (\Omega_{T_1, T_2})$, and the into 
$\mathbf{L}^2 _{\mathcal L}(\omega_{T_1,T_2})$.
Therefore the space  $\mathbf{H}^{k,2s,s} _{\mathcal L}(\Omega_{T_1, T_2})$  
is embedded to $\mathbf{L}^2_{\mathcal L} (\omega_{T_1, T_2})$ compactly, too, i.e. 
the operator $R_{\Omega,\omega}$ is compact. 

Finally,   \cite[example 1.9]{ShTaLMS} implies that the complete system of eigen-vectors 
of the  compact self-adjoint operator
 $R_{\Omega,\omega}^* R_{\Omega,\omega}: 
\mathbf{H}^{k,2s,s} _{\mathcal L}(\Omega_{T_1,T_2}) \to 
\mathbf{H}^{k,2s,s} _{\mathcal L}(\Omega_{T_1,T_2})$ 
is the basis looked for; here  
$R_{\Omega,\omega}^*$ is the adjoint operator for $R_{\Omega,\omega}$ 
in the sense of the Hilbert space theory.
\end{proof}

Since in theorem \ref{c.sol.n} we have $\partial \Omega \in C^{3+\gamma}$ and 
$\partial \Gamma \in C^{2+\gamma}$, then there is a bounded domain 
$\Omega^+ \subset {\mathbb R}^n$, such that 
the domain $D=\Omega^+ \cup \Gamma \cup \Omega$ has the boundary of the class $C^{1}$. 
Fix a relatively compact subdomain $\omega$ in $\Omega^+$ 
with $\partial \omega \in C^2$  and a basis 
$\{ b_\nu\}$ with the double orthogonality property granted by corollary
 \ref{c.bdo} for the pair  $\omega$, $D=\Omega^+ \cup \Gamma \cup \Omega$. 
Consider Carleman's type kernel:
\begin{equation*}
{\mathfrak C}^{(\omega)}_{N} (x,y,t,\tau) = 
\Phi (x-y,t,\tau) -  \sum_{\nu=0}^N 
\Big( \frac{b_\nu (x,t)}{\|b_\nu\|^2 _{\mathbf{L}
^{2} _{\mathcal L}(\omega_T)}} \int_{\omega_T} 
\Phi ^* (z-y,\tilde \tau-\tau) b_\nu (z,\tilde \tau)  
 dz d \tilde \tau
\Big) .
\end{equation*}
Let also $c_\nu (\tilde{\mathcal F})$ be the Fourier coefficients 
of the potential  $\tilde{\mathcal F}$ with respect to the basis 
$\{ b_{\nu|\omega_T}\}$ in the space $\mathbf{L}^{2} _{\mathcal L}(\omega_T)$:
\begin{equation}\label{eq.Fourier}
c_\nu (\tilde{\mathcal F}) = 
\Big( \int_{\omega_T} \tilde {\mathcal F}^* (z,\tilde 
\tau)  b_\nu (z,\tilde \tau) dz d\tilde \tau 
\Big) /\|b_\nu\|^2 _{L^{2} _{\mathcal L}(\omega_T)}.
\end{equation}

\begin{cor}[Kurilenko I.A.] \label{c.sol.bdo} 
Under assumptions of theorem \ref{c.sol.n}, if $\partial D \in C^1, \partial 
\omega \in C^2$, then problem
 \ref{pr.Cauchy} is solvable in the space
$\mathbf{C}^{2,1,\gamma,\gamma/2}(\Omega_T)\cap \mathbf{C}^{1,0,
\gamma,\gamma/2}(\Omega_T \cup \Gamma_T) \cap \mathbf{H}^{2,1}(\Omega_T)$
if and only if the series  
 $\sum_{\nu=1}^\infty |c_\nu (\tilde{\mathcal  F})|^2$ converges. Besides, if exists, 
the solution to the problem is given by the following formula:
\begin{equation} \label{eq.Carleman.bdo}
u (x,t) = \lim_{N\to +\infty}
\left(\int\limits_{\Omega_t} {\mathfrak C}^{(\omega)}
_{N} (x,y,t,\tau)f(y, \tau) dy d\tau 
+ \int\limits_{(\partial \Omega)_t} 
{\mathfrak C}_{N}^{(\omega)} (x,y,t,\tau) \tilde u_2(y, \tau) 
ds(y)d\tau+ \right.
\end{equation}
\begin{equation*}
\left.
\int\limits_{(\partial \Omega)_t} (B_1 (y) {\mathfrak C}_{N} ^{(\omega)})^*
(x,y,t,\tau)  \tilde u_1(y, \tau) ds(y)d\tau \right). 
\end{equation*}
\end{cor}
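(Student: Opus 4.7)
The plan is to reduce the statement, via Theorem~\ref{c.sol.n}, to a Fourier-series extension question for the potential $\tilde{\mathcal F}$ in the doubly orthogonal basis of Corollary~\ref{c.bdo}, and then to derive formula~\eqref{eq.Carleman.bdo} by substituting the definitions and interchanging summation with integration. By Theorem~\ref{c.sol.n}, Problem~\ref{pr.Cauchy} is solvable in the stated function class if and only if there exists $\tilde F \in \mathbf{H}^{2,1}(D_T) \cap S_{\mathcal L}(D_T)$ with $\tilde F = \tilde{\mathcal F}$ on $\Omega^+_T$, and in that case the unique solution equals $u = \tilde{\mathcal F} - \tilde F$ on $\Omega_T$ by~\eqref{eq.sol.tilde}. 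So the whole task reduces to (i) characterising extendability of $\tilde{\mathcal F}$ in terms of the Fourier coefficients $c_\nu(\tilde{\mathcal F})$, and (ii) giving an explicit formula for the difference $\tilde{\mathcal F} - \tilde F$.

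First I apply Corollary~\ref{c.bdo} with $k=0$ and $s=1$ to the pair $\omega \Subset D = \Omega^+\cup\Gamma\cup\Omega$, obtaining an orthonormal basis $\{b_\nu\}$ of $\mathbf{H}^{2,1}_{\mathcal L}(D_T)$ whose restriction $\{b_{\nu|\omega_T}\}$ is an orthonormal basis of $\mathbf{L}^2_{\mathcal L}(\omega_T)$. Since $\omega \Subset \Omega^+$ is disjoint from $\overline\Omega$, the kernel $\Phi(x-y,t-\tau)$ in every potential defining $\tilde{\mathcal F}$ is smooth for $x \in \omega$, so $\tilde{\mathcal F}|_{\omega_T} \in \mathbf{L}^2_{\mathcal L}(\omega_T)$ and the coefficients in~\eqref{eq.Fourier} are well-defined. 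For step~(i), necessity follows by expanding any hypothetical $\tilde F = \sum a_\nu b_\nu$ in $\mathbf{H}^{2,1}_{\mathcal L}(D_T)$, noting $\sum |a_\nu|^2 = \|\tilde F\|^2 < \infty$, and using continuity of the restriction map $\mathbf{H}^{2,1}_{\mathcal L}(D_T) \to \mathbf{L}^2_{\mathcal L}(\omega_T)$ together with the orthonormality of $\{b_{\nu|\omega_T}\}$ to force $a_\nu = c_\nu(\tilde{\mathcal F})$. For sufficiency, the series $\tilde F := \sum c_\nu(\tilde{\mathcal F}) b_\nu$ converges in $\mathbf{H}^{2,1}_{\mathcal L}(D_T)$, and its restriction to $\omega_T$ agrees with $\tilde{\mathcal F}|_{\omega_T}$; since both are $\mathcal L$-solutions on $\Omega^+_T$ which are real-analytic in the space variables, they coincide on the whole of $\Omega^+_T$ (provided, as we may arrange, that $\omega$ meets every connected component of $\Omega^+$).

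For step~(ii), I substitute $\tilde{\mathcal F}$ via its four defining potentials with kernel $\Phi$ and replace $\tilde F$ by the partial sums of its Fourier expansion. Using~\eqref{eq.Fourier} and Fubini's theorem, the partial sum $\sum_{\nu=0}^N c_\nu(\tilde{\mathcal F}) b_\nu(x,t)$ reorganises into a triple of volume and surface integrals against the data $f, \tilde u_1, \tilde u_2$ with integration kernel $\sum_{\nu=0}^N \frac{b_\nu(x,t)}{\|b_\nu\|^2}\int_{\omega_T}\Phi^*(z-y,\tilde\tau-\tau) b_\nu(z,\tilde\tau)\,dz\,d\tilde\tau$; subtracting this from the pure $\Phi$-parts produces precisely the Carleman kernel $\mathfrak{C}^{(\omega)}_N$, and passing to the limit in $\mathbf{H}^{2,1}_{\mathcal L}(D_T)$ yields~\eqref{eq.Carleman.bdo}. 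The main obstacle I anticipate is reconciling all four pieces of $\tilde{\mathcal F}$—in particular the initial-data potential $I_{\Omega,0}(\tilde u)$—with only the three terms actually appearing in~\eqref{eq.Carleman.bdo}. This is to be handled either via the Green-formula identity $\tilde{\mathcal F} = \chi_{\Omega_T}\tilde u + G_{\Omega,0}(f - \mathcal L \tilde u)$ already used in the proof of Theorem~\ref{c.sol.n}, which reshuffles the $\tilde u$-contribution into a single volume potential, or by exploiting the normality relation~\eqref{eq.left} of $\Phi$; once this bookkeeping is carried through, the remaining interchanges of sum and integral and the final limit are routine consequences of $\mathbf{H}^{2,1}$-convergence of the Fourier expansion and the hypoellipticity of $\mathcal L$.
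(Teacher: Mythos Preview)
Your argument follows the same route as the paper's: reduce via Theorem~\ref{c.sol.n} to the question of extending $\tilde{\mathcal F}$ from $\Omega^+_T$ (equivalently, by real analyticity in $x$, from $\omega_T$) to an element of $\mathbf{H}^{2,1}_{\mathcal L}(D_T)$; invoke the doubly orthogonal basis of Corollary~\ref{c.bdo}; characterise extendability by square-summability of the Fourier coefficients; and finally substitute the integral representation~\eqref{eq.tildeF} of $\tilde{\mathcal F}$ into~\eqref{eq.sol.bdo} and apply Fubini to obtain the Carleman kernel. The only presentational difference is that the paper outsources your step~(i) entirely to \cite[Example~1.9]{ShTaLMS}, whereas you spell out the necessity/sufficiency of $\sum|c_\nu|^2<\infty$ by hand; both are valid.

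On your flagged obstacle concerning the initial-data potential $I_{\Omega,0}(\tilde u)$: the paper's proof does not address this either. It asserts that substituting~\eqref{eq.tildeF} into~\eqref{eq.sol.bdo} and interchanging the order of integration yields~\eqref{eq.Carleman.bdo}, without discussing what becomes of the fourth term. Your two proposed resolutions do not actually eliminate it: the Green-formula rewriting $\tilde{\mathcal F}=\chi_{\Omega_T}\tilde u + G_{\Omega,0}(f-\mathcal L\tilde u)$ still leaves the auxiliary field $\tilde u$ in the formula, and the normality relation~\eqref{eq.left} concerns $\Phi^*$ rather than the initial trace. In other words, you have correctly spotted a bookkeeping gap that the paper itself glosses over; the displayed formula~\eqref{eq.Carleman.bdo} appears to be missing a term of the form $\int_\Omega \mathfrak{C}^{(\omega)}_N(x,y,t,0)\,\tilde u(y,0)\,dy$ corresponding to $I_{\Omega,0}(\tilde u)$, and neither your sketch nor the paper's proof explains its absence.
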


\begin{proof} As we already noted, $\mathbf{H}^{2,1}_{{\mathcal L}}(\Omega_T) 
\subset C^\infty (\Omega_T)$. Moreover, as the solutions 
to the operator ${\mathcal L}$ are real analytic with respect to the space 
variables, then the solvability conditions from theorem \ref{c.sol.n} are 
equivalent to the following: 
$R_{D,\omega} \tilde F=\tilde{\mathcal F}$. Thus the first statement of the corollary 
follows immediately from \ref{c.bdo} and \cite[example 1.9]{ShTaLMS}. On the other hand,  
according to \cite[example 1.9]{ShTaLMS}, if 
$\tilde F \in H^{2,1} _{{\mathcal L}}(D_T)$ is the extension of the 
vector field  $\tilde{\mathcal F}$ from $\omega_T$ onto $D_T$ then   
\begin{equation*}
\tilde F=\sum_{\nu=0}^\infty c_\nu (\tilde{\mathcal F})b_\nu (x,t), 
\quad (x,t) \in D_T.
\end{equation*}
Hence  \eqref{eq.sol.tilde} yields 
\begin{equation} \label{eq.sol.bdo}
u (x,t) = \lim_{N\to +\infty}
\Big(\tilde {\mathcal F} (x,t) - \sum_{\nu=0}^N c_\nu (\tilde{\mathcal F})b_\nu (x,t) \Big),
\quad (x,t) \in \Omega_T.
\end{equation}
Note that if  $y \in \Omega$ and $x\in \omega$, then $x\ne y$ and the components 
of the kernel $\Phi (x-y, t-\tau)$ are integrable over $\Omega_T \times \Omega_T$. 
Hence we may use integral formula 
 \eqref{eq.tildeF} for $\tilde{\mathcal F}$ and Fubini theorem  to change 
the order the integration in \eqref{eq.Fourier}. Thus,   \eqref{eq.sol.bdo} implies 
that formula  \eqref{eq.Carleman.bdo} is true. 
\end{proof}

\begin{rem} Note that the approximation theorem  
\ref{t.dense.base}  and the results of this paper related to 
the ill-posed Cauchy problem  \ref{pr.Cauchy} may be easily 
adopted to the backward parabolic operator 
${\mathcal L}^*$, if we will use instead of 
Green formula  \ref{eq.Green} its backward parabolic analogue, 
including an integral on final data instead of the integral on the initial ones.
\end{rem}

\bigskip

The first author was supported 
by the Krasnoyarsk Mathematical Center and financed by the Ministry of Science and Higher 
Education of the Russian Federation (Agreement No. 075-02-2022-876). 
The second and the third authors were supported by a grant of the Foundation for the 
advancement of theoretical physics and mathematics ``BASIS''.

\end{document}